\theoremstyle{definition}
\newtheorem{Def}{Definition}
\newtheorem{Rem}[Def]{Remark}
\theoremstyle{plain}
\newtheorem{Prop}[Def]{Proposition}
\newtheorem{Lemma}[Def]{Lemma}
\newtheorem{Thm}[Def]{Theorem}
\newtheorem{Cor}[Def]{Corollary}
\DeclareMathOperator{\R}{\mathbb{R}}
\DeclareMathOperator{\N}{\mathbb{N}}
\DeclareMathOperator{\Z}{\mathbb{Z}}
\DeclareMathOperator{\C}{\mathbb{C}}
\title{About the Uniform H\"{o}lder Continuity of\\ Generalized Riemann Function}
\author{F. Bastin \and S. Nicolay \and L. Simons}
\date{\today}
\begin{document}

\maketitle

\begin{abstract}
In this paper, we study the uniform H\"{o}lder continuity of the generalized Riemann function~$R_{\alpha,\beta}$ (with $\alpha>1$ and $\beta>0$) defined by
\[
R_{\alpha,\beta}(x)=\sum_{n=1}^{+\infty}\frac{\sin(\pi n^\beta x)}{n^\alpha},\quad x\in\R,
\]
using its continuous wavelet transform. In particular, we show that the exponent we find is optimal. We also analyse the behaviour of~$R_{\alpha,\beta}$ as $\beta$ tends to infinity.
\end{abstract}
\bigskip

\noindent\textbf{Keywords:} H\"{o}lder continuity, Continuous wavelet transform, Riemann function\\
\textbf{2010 Mathematical Subject Classification:} 26A16, %% Function of one variable, Lipschitz and Hölder classes
42C40, %%Harmonic analysis on Euclidean spaces, Wavelets and other special systems
30B50  %%Dirichlet series and other series expansions, exponential series

\section{Introduction}

In the $\text{19}^\text{th}$ century, Riemann introduced the function $R$ defined by
\[
R(x)=\sum_{n=1}^{+\infty}\frac{\sin(\pi n^2 x)}{n^2},\quad x\in\R,
\]
in order to construct a continuous but nowhere differentiable function (see~\cite{DBR} for some historical informations). The regularity of this function has been extensively studied by many authors. In 1916, Hardy~\cite{H} showed that $R$ is not differentiable at irrational numbers and at some rational numbers. In the seventies, Gerver~\cite{G1} and other people~\cite{I,M,Q,S,HT} proved that $R$ is only differentiable at the rational numbers $(2p+1)/(2q+1)$ (with $p\in\Z$ and $q\in\N$) with a derivative equals to $-1/2$.
\bigskip

The H\"{o}lder spaces allow to define a notion of smoothness or regularity for a function. In some way, it is an ``intermediate level'' between continuity and differentiability. Following~\cite{Ja,JMR,Da,T}, we adopt the next definition for H\"{o}lder spaces.

\begin{Def}
Let $\alpha\in[0,1)$, $f\in L^{\infty}(\R)$ and $x_0\in\R$. 
\begin{enumerate}[(1)]\itemsep=0cm
\item The function $f$ belongs to $C^\alpha(x_0)$ if there exists $C>0$ and $\varepsilon>0$ such that
\[
|f(x)-f(x_0)|\leq C |x-x_0|^{\alpha}
\]
for all $x\in(x_0-\varepsilon,x_0+\varepsilon)$. In this case, we say that $f$ is {\em H\"{o}lder continuous with exponent~$\alpha$ at~$x_0$}.
\item The function $f$ belongs to $C^{\alpha}(\R)$ if there exists $C>0$ such that
\[
|f(x)-f(y)|\leq C |x-y|^{\alpha}
\]
for all $x,y\in\R$. In this case, we say that $f$ is {\em uniformly H\"{o}lder continuous with exponent~$\alpha$ (on~$\R$)}.
\end{enumerate}
\end{Def}

\noindent The spaces defined above are embedded: if $\alpha<\beta$ for $\alpha,\beta\in[0,1)$, then $C^{\beta}(x_0)\subset C^\alpha(x_0)$ for any $x_0\in\R$ and $C^{\beta}(\R)\subset C^\alpha(\R)$. This property allows to define a notion of regularity, known as H\"{o}lder exponent.

\begin{Def}
Let $f\in L^\infty(\R)$ and let $x_0\in\R$.
\begin{enumerate}[(1)]\itemsep=0cm
\item The {\em H\"{o}lder exponent of $f$ at $x_0$} is
\[
H_f(x_0)=\sup\left\{\alpha\in[0,1):f\in C^{\alpha}(x_0)\right\}.
\]
\item The {\em uniform H\"{o}lder exponent of $f$ (on $\R$)} is
\[
H_f(\R)=\sup\left\{\alpha\in[0,1):f\in C^{\alpha}(\R)\right\}.
\]
\end{enumerate}
\end{Def}

\noindent Following this definition, if $f$ is differentiable, then $H_f(\R)=1$. Moreover, $H_f(\R)<1$ implies that $f$ is not differentiable. However, there exist non-differentiable functions with a uniform H\"{o}lder exponent equal to~$1$; the Takagi function (see~\cite{Ta,SS}) is a famous example.

\bigskip

Based on a work with Littlewood~\cite{HL}, Hardy~\cite{H} showed that~$R$ is not H\"{o}lder continuous with exponent~$3/4$ at irrational numbers and at some rational numbers. Using the continuous wavelet transform (of~$R$), Holschneider and Tchamitchian~\cite{HT} established that $R$ is uniformly H\"{o}lder continuous with exponent $1/2$ and gave some results about its H\"{o}lder continuity at some particular points. With some similar techniques, Jaffard and Meyer~\cite{Ja,JM} determined the H\"{o}lder exponent of $R$ at each point and proved that $R$ is a multifractal function, i.e. that the function $x\mapsto H_R(x)$ is not constant.

\bigskip
A generalization of $R$ is given by the function $R_{\alpha,\beta}$ defined by
\begin{equation}\label{Rgen}
R_{\alpha,\beta}(x)=\sum_{n=1}^{+\infty} \frac{\sin(\pi n^\beta x)}{n^\alpha},\quad x\in\R,
\end{equation}
with $\alpha>1$ and $\beta>0$. Other generalizations of $R$ are possible; for example, one can replace the element~$n^\beta$ in the definition of $R_{\alpha,\beta}$ by a polynomial with entire coefficients (see~\cite{CU,Q}).

\bigskip

The function~$R_{\alpha,\beta}$ defined in~\eqref{Rgen} is clearly continuous and bounded on $\R$. If $\beta\in(0,\alpha-1)$, it is easy to check that $R_{\alpha,\beta}$ is continuously differentiable on $\R$ (because the series of derivatives uniformly converge on $\R$). If $\beta\geq\alpha+1$, Luther~\cite{L} proved that $R_{\alpha,\beta}$ is nowhere differentiable. If $\beta\in[\alpha-1,\alpha+1)$, several partial results about the differentiability of~$R_{\alpha,\beta}$ are known (see~\cite{Q,L}). Moreover, some results are also known for the cases $\beta=2$ (see~\cite{H,Ja}), $\beta=3$ (see~\cite{G}) and $\beta\in\N\setminus\{0\}$ (see~\cite{CU1}). Concerning the H\"{o}lder continuity and also the H\"{o}lder exponent of~$R_{\alpha,\beta}$, several particular cases have been studied (see~\cite{B,Ja,JM,JJ,CU1,U}).

\bigskip
In this paper, we study the uniform H\"{o}lder continuity of $R_{\alpha,\beta}$ with $\beta\geq\alpha-1$ in order to complete and generalize a result of Johnsen~\cite{JJ} in 2010 which claims that, if $\beta>\alpha-1$, then~$R_{\alpha,\beta}$ is uniformly H\"{o}lder continuous with an exponent superior or equal to~$(\alpha-1)/\beta$. To achieve this, we use some techniques different from the ones of Johnsen. Our approach is based on the continuous wavelet transform of~$R_{\alpha,\beta}$ related to the Lusin wavelet, and is similar to the ones used to obtain the H\"{o}lder continuity of~$R$ in~\cite{JMR,Ja,HT}. This method has two advantages: we can consider both the cases $\beta=\alpha-1$ and $\beta>\alpha-1$ to study the uniform H\"{o}lder continuity of~$R_{\alpha,\beta}$ and then show the optimality of the so obtained exponent. In other words, we calculate the uniform H\"{o}lder exponent of~$R_{\alpha,\beta}$ for $\beta\geq\alpha-1$. These results are summarized in the following theorem.
\begin{Thm}\label{Main}
We have
\[
H_{R_{\alpha,\beta}}(\R)=\left\{
\begin{array}{ll}\vspace{1.5ex}
1&\text{if }\beta=\alpha-1\\ 
\displaystyle\frac{\alpha-1}{\beta}&\text{if }\beta>\alpha-1
\end{array}\right..
\]
\end{Thm}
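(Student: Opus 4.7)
The plan is to use the continuous wavelet transform with the Lusin wavelet $\psi$, whose Fourier transform is supported in $(0,+\infty)$ and decays exponentially at infinity, together with the classical characterization: for $s\in(0,1)$, $f\in C^s(\R)$ if and only if there exists $C>0$ such that $|W_f(a,b)|\leq C\,a^{s+1/2}$ uniformly in $b\in\R$ and $a\in(0,1]$, where $W_f(a,b)=a^{-1/2}\int f(x)\overline{\psi((x-b)/a)}\,dx$. This is the strategy used in~\cite{HT,Ja,JMR} for the classical case $\alpha=\beta=2$, and it has the advantage of treating the critical case $\beta=\alpha-1$ and the strict case $\beta>\alpha-1$ uniformly.

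Writing $R_{\alpha,\beta}(x)=\mathrm{Im}\sum_{n\geq 1}e^{i\pi n^\beta x}/n^\alpha$ and using the fact that the wavelet coefficient of $e^{i\omega x}$ (with $\omega>0$) against a Lusin wavelet is essentially $a^{1/2}\,e^{i\omega b}\,\overline{\hat\psi(a\omega)}$, linearity yields
\[
W_{R_{\alpha,\beta}}(a,b)=\mathrm{Im}\,a^{1/2}\sum_{n=1}^{+\infty}\frac{\overline{\hat\psi(\pi a n^\beta)}}{n^\alpha}\,e^{i\pi n^\beta b}.
\]
Fixing $M$ large enough so that $|\hat\psi(\xi)|\leq C\xi^{M-1}e^{-\xi}$ on $(0,+\infty)$ with $\beta(M-1)-\alpha+1>0$, I would split the series at the transition scale $n\asymp a^{-1/\beta}$: the low-frequency piece is bounded by comparison with $\int_1^{a^{-1/\beta}}n^{\beta(M-1)-\alpha}\,dn$, while the high-frequency piece is controlled by the exponential decay of $\hat\psi$. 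Both contribute of the same order; combined with the global factor $a^{M-1}$ coming from $\hat\psi$, this gives $|W_{R_{\alpha,\beta}}(a,b)|\leq C\,a^{(\alpha-1)/\beta+1/2}$ when $\beta>\alpha-1$, and $|W_{R_{\alpha,\beta}}(a,b)|\leq C\,a^{3/2}$ in the critical case $\beta=\alpha-1$ (choosing $M$ large enough to avoid any logarithmic correction). By the characterization this yields $H_{R_{\alpha,\beta}}(\R)\geq(\alpha-1)/\beta$ in the first case and $H_{R_{\alpha,\beta}}(\R)=1$ in the second, since the supremum defining the uniform H\"older exponent is restricted to $[0,1)$.

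For the optimality in the case $\beta>\alpha-1$, I would use the converse direction of the characterization: it suffices to exhibit a sequence $(a_k,b_k)$ with $a_k\to 0$ and $|W_{R_{\alpha,\beta}}(a_k,b_k)|\gtrsim a_k^{(\alpha-1)/\beta+1/2}$. A natural choice is $a_k\asymp k^{-\beta}$, together with a parameter $b_k$ chosen to align the phases $e^{i\pi n^\beta b_k}$ for $n$ in a block of order $k$ indices around $k$, so that the corresponding terms of the series add constructively rather than cancel. Since $|\hat\psi(\pi a_k n^\beta)|$ is then of order one on this block, coherent summation gives $|W_{R_{\alpha,\beta}}(a_k,b_k)|\gtrsim a_k^{1/2}\cdot k^{1-\alpha}=a_k^{(\alpha-1)/\beta+1/2}$, matching the upper bound exactly.

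The main obstacle is precisely this optimality step: producing $b_k$ requires effective control of the fractional parts of $b n^\beta$ for $n$ in a finite window, which for integer $\beta$ is tractable via Weyl sums and Dirichlet approximation, but for general real $\beta>\alpha-1$ demands a more careful equidistribution argument (or, alternatively, an $L^2$ average of $|W_{R_{\alpha,\beta}}(a,\cdot)|^2$ sharpened to an $L^\infty$ estimate by exploiting the bandlimited nature of $b\mapsto W_{R_{\alpha,\beta}}(a,b)$). The upper bound, by contrast, reduces to elementary summation estimates once the explicit expression of $W_{R_{\alpha,\beta}}(a,b)$ is in hand, so the difficulty is genuinely concentrated in the sharpness of the exponent.
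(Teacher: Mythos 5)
Your upper-bound argument is essentially the paper's: compute $\mathcal{W}_\psi R_{\alpha,\beta}$ explicitly for the Lusin wavelet (the paper does this via Cauchy's integral formula, obtaining $\mathcal{W}_\psi R_{\alpha,\beta}(a,b)=ia\pi\sum_{n}e^{i\pi n^\beta(b+ia)}n^{\beta-\alpha}$ in $L^1$-normalization, which is your formula since $\hat\psi(\xi)=-2\xi e^{-\xi}$), then split the sum at $n\asymp a^{-1/\beta}$ and compare with a Gamma-type integral. Two caveats there. First, the Lusin wavelet has exactly one vanishing moment, so you cannot ``choose $M$ large'': $M=2$ is forced, and in the critical case $\beta=\alpha-1$ the logarithmic factor is unavoidable with this wavelet (the paper gets the exponential integral $E_1(a\pi)\sim\ln(1/a)$). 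But it is also harmless: $R_{\alpha,\alpha-1}\in C^{1-\delta}(\R)$ for every $\delta>0$ already gives $H=1$, since the supremum is taken over $[0,1)$, exactly as you observe.

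The genuine gap is the optimality step, which you flag as the main obstacle and leave unresolved. No equidistribution, Weyl-sum or phase-selection argument is needed: take $b_k=0$. At $b=0$ every phase $e^{i\pi n^\beta b}$ equals $1$, so the series has positive terms and
\[
|\mathcal{W}_\psi R_{\alpha,\beta}(a,0)|=a\pi\sum_{n=1}^{+\infty}\frac{e^{-a\pi n^\beta}}{n^{\alpha-\beta}}
\geq a\pi\int_{1}^{+\infty}x^{\beta-\alpha}e^{-a\pi x^\beta}\,dx
=\frac{(a\pi)^{\frac{\alpha-1}{\beta}}}{\beta}\,\Gamma\!\left(\tfrac{\beta-\alpha+1}{\beta},a\pi\right)
\gtrsim a^{\frac{\alpha-1}{\beta}}
\]
as $a\to 0^+$, by the same series--integral comparison used for the upper bound (starting the comparison at the mode of the integrand when $\beta\geq\alpha$). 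Indeed the triangle inequality gives $|\mathcal{W}_\psi R_{\alpha,\beta}(a,b)|\leq|\mathcal{W}_\psi R_{\alpha,\beta}(a,0)|$ for all $b$, so $b=0$ is exactly the worst point, and your heuristic ``coherent block of $\asymp k$ indices around $k$'' is realized there with no number theory at all. Combined with the necessity half of the wavelet characterization ($f\in C^\gamma(\R)$ forces $|\mathcal{W}_\psi f(a,b)|\leq Ca^\gamma$), this shows $R_{\alpha,\beta}\notin C^{\frac{\alpha-1}{\beta}+\eta}(\R)$ for every $\eta>0$ and closes the proof. As written, your proposal does not establish optimality, and the route you sketch for it (Dirichlet approximation, bandlimited $L^2$-to-$L^\infty$ upgrades) is far heavier machinery than the statement requires.
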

\bigskip

If we fix $\alpha>1$, the uniform H\"{o}lder exponent of~$R_{\alpha,\beta}$ decreases to~$0$ as $\beta$ increases to infinity. In order to illustrate this phenomenon, we give the graphical representation of~$R_{\alpha,\beta}$ for some~$\beta$. For $\beta$ large enough, we can observe that~$R_{\alpha,\beta}$ seems to be the function $x\mapsto \sin(\pi x)$ with some noise or fluctuations all around. In fact, this function is simply the first term of the series defining~$R_{\alpha,\beta}$. We show that $R_{\alpha,\beta}$ can be, on average, compared to the function $x\mapsto\sin(\pi x)$ and we measure the amplitude of these fluctuations.

\bigskip
The paper is organized as follows. In Section~\ref{CWT}, we recall some helpful properties about the continuous wavelet transform and the tool that it provides to study the H\"{o}lder continuity of a function. We will extensively take advantage of the properties of the Lusin wavelet. The proof of Theorem~\ref{Main} is given in Section~\ref{ProofMain}. We analyse in Section~\ref{Graphic} the behaviour of~$R_{\alpha,\beta}$ as $\beta$ increases. We present the graphical representation of~$R_{2,\beta}$ for some particular values of~$\beta$. In section~\ref{FinalRem}, we give some additional comments about the more general case of nonharmonic Fourier series. We also show the limitations of the Lusin wavelet to investigate the research of the maximal possible H\"{o}lder exponent of~$R_{\alpha,\beta}$ at a point.

\section{H\"{o}lder continuity and continuous wavelet transform}\label{CWT}

Let us recall some notions about the continuous wavelet transform and the H\"{o}lder continuity of a function (see~\cite{Da,Ja,JMR,T,Ho,HT}). The natural space associated to the continuous wavelet transform is the Hilbert space~$L^2(\R)$. Such a setting is of no interest for the function~$R_{\alpha,\beta}$, since it does not belong to $L^2(\R)$. As~$R_{\alpha,\beta}$ is a continuous and bounded function on~$\R$, the continuous wavelet transform of a function of~$L^\infty(\R)$ is more appropriate.

\begin{Def}
The function $\psi$ is a {\em wavelet} if $\psi\in L^1(\R)\cap L^2(\R)$ and $\hat{\psi}(0)=0$, where $\hat{\psi}$ denotes the Fourier transform of $\psi$:
\[
\hat{\psi}(\xi)=\int_{\R}e^{-ix\xi}\psi(x)\,dx,\quad\xi\in\R.
\]
Using the wavelet $\psi$, the {\em continuous wavelet transform} of a function $f\in L^\infty(\R)$ is the function~$\mathcal{W}_\psi f$ defined by
\[
\mathcal{W}_\psi f(a,b)=\int_{\R} f(x)\,\frac{1}{a}\,\overline{\psi}\left(\frac{x-b}{a}\right)\,dx,\quad a>0,\,b\in\R,
\]
where $\overline{\psi}$ denotes the complex conjugate of $\psi$.
\end{Def}

\bigskip
In order to study the uniform H\"{o}lder continuity of~$R_{\alpha,\beta}$, we will use a peculiar wavelet, known as the Lusin wavelet:
\begin{equation}\label{Lusin}
\psi(x)=\frac{1}{\pi(x+i)^2},\quad x\in\R.
\end{equation}
Since
\[
\hat{\psi}(\xi)=\left\{\begin{array}{ll}
-2\xi e^{-\xi}&\text{if }\xi\geq 0\\
0&\text{if }\xi<0
\end{array}\right.,
\]
this wavelet belongs to the second Hardy space
\[
H^2(\R)=\left\{f\in L^2(\R): \hat{f}=0\,\text{ a.e. on }(-\infty,0)\right\}.
\]
Such a property will be useful to obtain a simple explicit expression of~$\mathcal{W}_\psi R_{\alpha,\beta}$ (in comparison with the derivatives of a gaussian function for example).

\bigskip
An exact reconstruction formula exists in such a situation: if $\psi$ belongs to~$H^2(\R)$ and if $f$ belongs to a certain class of continuous and bounded functions on~$\R$, we can recover~$f$ from $\mathcal{W}_\psi f$ using a second wavelet satisfying some additional properties. This result is strongly inspired by Proposition 2.4.2 in~\cite{Da} and Theorem 2.2 in~\cite{HT}. For the sake of completeness, we give in the appendix a proof based on the ideas of~\cite{Da,HT,Ho} and adapted to our case.

\begin{Thm}\label{LusinRecons}
Let $\psi$ be a wavelet which belongs to $H^2(\R)$. Let $\varphi$ be a differentiable wavelet such that $x\mapsto x\varphi(x)$ is integrable on~$\R$, such that $D\varphi$ is square integrable on~$\R$ and such that \begin{equation}\label{CondAdm}
\int_0^{+\infty}\overline{\hat{\psi}}(\xi)\hat{\varphi}(\xi)\,\frac{d\xi}{\xi}=1.
\end{equation}
If $f$ is a continuous and bounded function on $\R$ and is weakly oscillating around the origin, i.e. such that
\[
\lim_{r\to +\infty}\; \sup_{x\in\R}\left|\frac{1}{2r}\int_{x-r}^{x+r} f(t)\,dt\right|=0,
\]
then we have
\[
f(x)=\lim_{\substack{\varepsilon\to 0^+\\ r\to+\infty}} 2\int_\varepsilon^r\left(\int_{-\infty}^{+\infty}\mathcal{W}_\psi f(a,b)\,\frac{1}{a}\varphi\left(\frac{x-b}{a}\right)\,db\right)\,\frac{da}{a}
\]
for all $x\in\R$.
\end{Thm}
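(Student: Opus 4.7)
The plan is to rewrite the expression on the right-hand side as a convolution $(f\ast K_{\varepsilon,r})(x)$ with an explicit kernel $K_{\varepsilon,r}$, and then to show that $K_{\varepsilon,r}$ acts as an approximate identity on the class of bounded continuous weakly oscillating functions. The main difficulty is that $K_{\varepsilon,r}$ will not belong to $L^1$ uniformly in $\varepsilon,r$: its tails decay only like $1/|y|$ in general, so the classical $L^\infty$ approximate-identity argument is unavailable. The weak oscillation hypothesis is precisely what is needed to handle those tails.

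The first two steps are bookkeeping. Observing that $\mathcal{W}_\psi f(a,\cdot)=f\ast\tilde\psi_a$ with $\tilde\psi_a(y)=\frac{1}{a}\overline{\psi}(-y/a)$, Fubini (applicable since $f\in L^\infty$ and $\psi,\varphi\in L^1$) lets me rewrite the inner $b$-integral as $(f\ast\tilde\psi_a\ast\varphi_a)(x)$ with $\varphi_a(y)=\frac{1}{a}\varphi(y/a)$. A second application of Fubini to the outer $a$-integral then gives $f_{\varepsilon,r}(x)=(f\ast K_{\varepsilon,r})(x)$, where
\[
K_{\varepsilon,r}=2\int_\varepsilon^r \tilde\psi_a\ast\varphi_a\,\frac{da}{a}.
\]
Its Fourier transform,
\[
\widehat{K_{\varepsilon,r}}(\xi)=2\int_\varepsilon^r\overline{\hat\psi(a\xi)}\hat\varphi(a\xi)\,\frac{da}{a},
\]
is easily analysed: the substitution $u=a\xi$, the admissibility relation~\eqref{CondAdm}, and the fact that $\hat\psi$ is supported in $[0,+\infty)$ (since $\psi\in H^2(\R)$) show that $\widehat{K_{\varepsilon,r}}(\xi)$ converges pointwise to a constant multiple of $\mathbf{1}_{(0,+\infty)}$. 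This already settles the theorem for, say, Schwartz $f$, through Fourier inversion and dominated convergence.

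The heart of the proof is to extend this to a bounded continuous weakly oscillating $f$. I would use the hypotheses $x\varphi(x)\in L^1(\R)$, $D\varphi\in L^2(\R)$, together with $\hat\psi(0)=0=\hat\varphi(0)$, to show that $K_{\varepsilon,r}$ admits a primitive $L_{\varepsilon,r}(y)=\int_{-\infty}^y K_{\varepsilon,r}(t)\,dt$ that is uniformly bounded and tends to $0$ at $\pm\infty$. An integration by parts, performed on an antiderivative of $f$ rather than on $f$ itself (since $f$ is not assumed differentiable), would then rewrite $(f\ast K_{\varepsilon,r})(x)$ with $L_{\varepsilon,r}$ in place of $K_{\varepsilon,r}$ and the primitive of $f$ in place of $f$. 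The near-zero part is handled by continuity of $f$ in the usual way, while the far-away contributions become averages of $f$ of Ces\`aro type whose supremum tends to $0$ by the weak oscillation hypothesis.

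The main obstacle is precisely this integration by parts: because $f$ is merely bounded, its primitive $F(y)=\int_0^y f$ is only $O(|y|)$, so one needs to verify that $F(y)L_{\varepsilon,r}(y)\to 0$ at infinity uniformly in the parameters and that $F(y)\,K_{\varepsilon,r}(y)$ is absolutely integrable. Both points require the decay of $L_{\varepsilon,r}$ at infinity to be strong enough, and this is where the integrability assumption on $x\varphi(x)$, the $L^2$ assumption on $D\varphi$, and $\psi\in H^2(\R)$ are all simultaneously consumed. The weak oscillation condition $F(y)/y\to 0$ then delivers the desired convergence of $(f\ast K_{\varepsilon,r})(x)$ to $f(x)$ in the limit.
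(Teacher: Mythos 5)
Your reduction of the right--hand side to a convolution $f\ast K_{\varepsilon,r}$ and your computation of $\widehat{K_{\varepsilon,r}}$ coincide with the first half of the paper's proof. From there the two arguments diverge, and the divergence is where your proposal leaves the real work undone. The paper's key device is that the kernel is \emph{self-similar}: setting $m(\xi)=\int_\xi^{+\infty}\overline{\hat\psi}(a)\hat\varphi(a)\,\frac{da}{a}$ for $\xi\ge 0$ (and the analogous expression for $\xi<0$, where it vanishes because $\hat\psi$ is supported in $[0,+\infty)$), one gets $\widehat{M_{\varepsilon,r}}(\xi)=m(\varepsilon\xi)-m(r\xi)$, hence $M_{\varepsilon,r}(t)=\tfrac12\bigl(\varepsilon^{-1}M(t/\varepsilon)-r^{-1}M(t/r)\bigr)$ for a \emph{single fixed} function $M=\hat m(-\cdot)/\pi$. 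Every hypothesis is then spent on that one function: $D\varphi\in L^2$ and the boundedness of $\hat\psi$ give $|m(\xi)|\le C(1+|\xi|)^{-3/2}$; integrability of $x\varphi(x)$ gives $|\hat\varphi(\xi)|\le C''|\xi|$ and hence $Dm\in L^2$; from this the paper concludes $M\in L^1$ with $\int M=1$, so that the $\varepsilon\to 0^+$ limit is the classical approximate-identity statement (only continuity and boundedness of $f$ are used there) and the $r\to+\infty$ limit is a separate lemma of Holschneider for bounded, weakly oscillating functions. Weak oscillation enters only in the second limit.

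Measured against this, your plan has a genuine gap and a structural misreading. The gap: everything you need about the primitive $L_{\varepsilon,r}$ --- existence, uniform boundedness, decay at $\pm\infty$ fast enough to beat the linear growth of $F$, all uniformly in $\varepsilon$ and $r$ --- \emph{is} the quantitative content of the theorem, and you defer all of it (``this is where the hypotheses are all simultaneously consumed'') without indicating how $D\varphi\in L^2$, $x\varphi(x)\in L^1$ and $\psi\in H^2(\R)$ actually produce those bounds. The misreading: since $\hat\psi(0)=\hat\varphi(0)=0$, your kernel satisfies $\int_\R K_{\varepsilon,r}=\widehat{K_{\varepsilon,r}}(0)=0$ for every $\varepsilon,r$, so it cannot ``act as an approximate identity'' through a near-zero continuity argument --- that argument would return $0$, not $f(x)$. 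What really happens is that $K_{\varepsilon,r}$ is the difference of two unit-mass bumps, one at scale $\varepsilon$ (reproducing $f(x)$) and one at scale $r$ (reproducing a large-scale average of $f$, which weak oscillation kills); your integration-by-parts scheme never disentangles the two scales, and without that separation neither the ``near-zero'' step nor the ``Ces\`aro tail'' step can be carried out as described. Two further cautions: your worry that the kernel is not uniformly in $L^1$ is legitimate only if one distrusts the paper's claim $M\in L^1$ (the jump of $m$ at the origin deserves more care than either you or the paper give it), so you cannot both rely on $1/|y|$ tails as the ``main difficulty'' and leave their treatment unexecuted; and your Schwartz-class sanity check does not close as stated, since the limit multiplier $2\cdot\mathbf{1}_{(0,+\infty)}$ reproduces twice the positive-frequency part of $f$, which agrees with a real-valued $f$ only after the negative frequencies are accounted for.
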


Thanks to this reconstruction formula, the H\"{o}lder continuity of a function can be characterized with its continuous wavelet transform, provided that the wavelet satisfies some additional conditions. We will use the following result to study the H\"{o}lder continuity of the generalized Riemann function (see~\cite{JMR,Ja,HT}). 

\begin{Thm}\label{CaractUnif}
Let $\alpha\in(0,1)$, let $\psi$ be a wavelet such that $x\mapsto x^\alpha\psi(x)$ is integrable on~$\R$ and let~$f$ be a function as in Theorem~\ref{LusinRecons}.
\begin{enumerate}[(1)]\itemsep=0cm
\item We have $f\in C^{\alpha}(\R)$ if and only if there exists $C>0$ such that 
\[
|\mathcal{W}_\psi f(a,b)|\leq C\,a^{\alpha}
\]
for all $a>0$ and $b\in\R$.
\item Let $x_0\in\R$. If $f\in C^{\alpha}(x_0)$, then there exist $C>0$ and $\eta>0$ such that 
\[
|\mathcal{W}_\psi f(a,b)|\leq C\,a^{\alpha}\left(1+\left(\frac{|b-x_0|}{a}\right)^\alpha\right)
\]
for all $a\in(0,\eta)$ and $b\in(x_0-\eta,x_0+\eta)$. Conversely, if there exist $\alpha'\in(0,\alpha)$, $C>0$ and $\eta>0$ such that
\[
|\mathcal{W}_\psi f(a,b)|\leq C\,a^{\alpha}\left(1+\left(\frac{|b-x_0|}{a}\right)^{\alpha'}\right)
\]
for all $a\in(0,\eta)$ and $b\in(x_0-\eta,x_0+\eta)$, then $f\in C^\alpha(x_0)$.\hfill $\square$
\end{enumerate}
\end{Thm}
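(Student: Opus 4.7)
The strategy is to prove both directions of each characterization using the reconstruction formula of Theorem~\ref{LusinRecons} together with direct estimation of the wavelet coefficients. In each case one implication is an essentially direct calculation using the vanishing moment $\hat\psi(0)=0$, while the other requires the reconstruction formula and a dyadic-type splitting of scales.

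\textbf{Necessity in (1).} Since $\hat\psi(0)=0$, we have $\int_\R \psi = 0$, so the change of variables $u=(x-b)/a$ gives
\[
\mathcal{W}_\psi f(a,b) = \int_\R\bigl[f(b+au)-f(b)\bigr]\,\overline{\psi}(u)\,du.
\]
If $f\in C^\alpha(\R)$, apply $|f(b+au)-f(b)|\leq C|au|^\alpha$ and factor out $a^\alpha$; the integral $\int_\R |u|^\alpha|\psi(u)|\,du$ is finite by hypothesis, which yields $|\mathcal{W}_\psi f(a,b)|\leq C'a^\alpha$.

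\textbf{Sufficiency in (1).} Pick an auxiliary differentiable wavelet $\varphi$ satisfying the hypotheses of Theorem~\ref{LusinRecons} and the normalization~\eqref{CondAdm}. Writing $f(x)-f(y)$ with the reconstruction formula and inserting the bound $|\mathcal{W}_\psi f(a,b)|\leq Ca^\alpha$, split the $a$-integral at the threshold $a=|x-y|$. For small scales $a\leq|x-y|$ use $\|\varphi\|_{L^1}<\infty$ to bound the inner $b$-integral by $2\|\varphi\|_{L^1}$, producing $\int_0^{|x-y|}a^{\alpha-1}\,da\sim|x-y|^\alpha$. For large scales $a>|x-y|$ use the regularity of $\varphi$ (differentiability and $x\varphi(x)\in L^1$) to estimate the difference $\varphi((x-b)/a)-\varphi((y-b)/a)$ in $L^1_b$ by $C|x-y|/a$, producing $|x-y|\int_{|x-y|}^{+\infty}a^{\alpha-2}\,da\sim|x-y|^\alpha$. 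Summing these two contributions gives $|f(x)-f(y)|\leq C|x-y|^\alpha$.

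\textbf{Part (2), necessity.} The computation is the same as in (1), except that the Hölder bound $|f(x)-f(x_0)|\leq C|x-x_0|^\alpha$ only holds near $x_0$. Splitting $\mathcal{W}_\psi f(a,b)$ into the part where $|x-x_0|<2|b-x_0|$ (bounded using $\|f\|_\infty$ together with the decay of $\psi$) and the part where $|x-x_0|$ is comparable to or much smaller than $|b-x_0|$ (bounded using the local Hölder estimate), the extra factor $1+(|b-x_0|/a)^\alpha$ appears naturally as the measure of how far $b$ sits from $x_0$ relative to the scale $a$.

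\textbf{Part (2), sufficiency, and the main obstacle.} This is the most delicate step. Again one starts from the reconstruction formula applied to $f(x)-f(x_0)$ and inserts the hypothesized bound $|\mathcal{W}_\psi f(a,b)|\leq Ca^\alpha(1+(|b-x_0|/a)^{\alpha'})$. The proof requires carefully partitioning the $(a,b)$-half-plane according to the three scales $a$, $|b-x_0|$, and $|x-x_0|$, and estimating each of the resulting regions separately using either $L^1$-bounds on $\varphi$ or its Lipschitz control, as in the proof of (1). The growth factor $(|b-x_0|/a)^{\alpha'}$ produces integrals in $b$ that diverge when one uses just $\|\varphi\|_{L^1}$; this is precisely why one needs $\alpha'<\alpha$, so that the extra decay in $\varphi$ (coming from $x\varphi(x)\in L^1$, which allows sharper pointwise decay than just $\|\varphi\|_{L^1}$) outweighs the polynomial growth and restores summability. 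Showing that all the pieces combine to a term bounded by $C|x-x_0|^\alpha$ in a neighborhood of $x_0$ is the technical core of the argument.
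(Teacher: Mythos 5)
First, a point of comparison: the paper does not prove Theorem~\ref{CaractUnif} at all --- it is stated with a terminal $\square$ and attributed to~\cite{JMR,Ja,HT} --- so your outline is to be measured against the standard arguments in those references, which is indeed the route you take. For part~(1) and the direct implication of part~(2) your sketch is the right argument and essentially complete: the necessity uses $\hat{\psi}(0)=0$ to write $\mathcal{W}_\psi f(a,b)=\int_{\R}\bigl(f(b+au)-f(b)\bigr)\overline{\psi}(u)\,du$ together with the integrability of $u\mapsto|u|^\alpha\psi(u)$, and the sufficiency in~(1) uses the reconstruction formula with the scale split at $a=|x-y|$. Two small caveats there: the hypotheses of Theorem~\ref{LusinRecons} only give $D\varphi\in L^2(\R)$, so the large-scale estimate $\int_{\R}\frac{1}{a}\bigl|\varphi\bigl(\frac{x-b}{a}\bigr)-\varphi\bigl(\frac{y-b}{a}\bigr)\bigr|\,db\leq C\frac{|x-y|}{a}$ requires you to note that $\varphi$ may be \emph{chosen} with $D\varphi\in L^1(\R)$ (the admissibility condition \eqref{CondAdm} leaves ample freedom); and in the necessity of~(2) the near/far split should be governed by whether $b+au$ lies in the neighbourhood of $x_0$ where the pointwise H\"older bound holds, not by comparing $|x-x_0|$ with $2|b-x_0|$ as written --- the far region is then controlled by $\|f\|_{\infty}$ times the tail $\int_{|u|\geq c/a}|\psi(u)|\,du\leq (a/c)^{\alpha}\int|u|^{\alpha}|\psi(u)|\,du$.

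The genuine gap is in the converse of part~(2), which you announce rather than prove, and the one piece of mathematical content you do supply for it is misattributed. The condition $\alpha'<\alpha$ has nothing to do with the decay of $\varphi$ compensating the growth in $b$: the $b$-integration of the weight $\bigl(\frac{|b-x_0|}{a}\bigr)^{\alpha'}$ against $\frac{1}{a}\varphi\bigl(\frac{x-b}{a}\bigr)$ is controlled by $\int_{\R}|v|^{\alpha'}|\varphi(v)|\,dv<\infty$ (available for every $\alpha'<1$ from $\varphi,\,x\varphi\in L^1(\R)$) and returns $\bigl(\frac{|x-x_0|}{a}\bigr)^{\alpha'}+O(1)$. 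The strictness $\alpha'<\alpha$ is needed \emph{afterwards}, in the integration over scales: the small-scale contribution is of order $\int_0^{|x-x_0|}a^{\alpha-\alpha'}|x-x_0|^{\alpha'}\,\frac{da}{a}=\frac{1}{\alpha-\alpha'}|x-x_0|^{\alpha}$, which converges precisely because $\alpha>\alpha'$; with $\alpha'=\alpha$ one only obtains $|x-x_0|^{\alpha}\log(1/|x-x_0|)$. In addition, your partition of the $(a,b)$ half-plane ignores the region $|b-x_0|\geq\eta$, $a<\eta$, where the hypothesis gives no bound on $\mathcal{W}_\psi f(a,b)$ beyond $\|f\|_\infty\|\psi\|_{L^1}$; controlling that region by $O(|x-x_0|^{\alpha})$ requires pointwise decay of $\varphi$ and $D\varphi$ (again a matter of choosing $\varphi$ well) and is exactly the place where the references impose an additional global regularity hypothesis on $f$. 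As it stands, the ``technical core'' you defer is the whole of the implication, so this part of the proposal cannot be accepted as a proof.
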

\bigskip

\begin{Rem}
Let us note that the necessary conditions in Theorem~\ref{CaractUnif} do not need all the hypotheses on the function~$f$: the continuity and the weak oscillation around the origin of~$f$ are not useful for these implications.
\end{Rem}
\bigskip

The generalized Riemann function and the Lusin wavelet satisfy the conditions of the two previous theorems. Indeed, we know that~$R_{\alpha,\beta}$ is continuous and bounded and that the Lusin wavelet~$\psi$ belongs to~$H^2(\R)$. Moreover, $R_{\alpha,\beta}$ is weakly oscillating around the origin because
\[
\left|\frac{1}{2r}\int_{x-r}^{x+r} R_{\alpha,\beta}(t)\,dt\right|
\leq\left|\frac{1}{2r}\sum_{n=1}^{+\infty}\frac{\cos((x-r)\pi n^\beta)-\cos((x+r)\pi n^\beta)}{\pi n^{\alpha+\beta}}\right|
\leq\frac{\zeta(\alpha+\beta)}{\pi r}
\]
for all $x\in\R$ and $r>0$, and $x\mapsto x^\alpha\psi(x)$ is clearly integrable for $\alpha\in(0,1)$. Besides, it is easy to find a differentiable wavelet~$\varphi$ such that $x\mapsto x\varphi(x)$ is integrable on~$\R$, such that $D\varphi$ is square integrable on~$\R$ and such that
\[
\int_0^{+\infty}\hat{\varphi}(\xi)e^{-\xi}\,d\xi=-\frac{1}{2}.
\]
In the following, $\psi$ will systematically denote the Lusin wavelet (see~\eqref{Lusin}).

\section{H\"{o}lder continuity of generalized Riemann function}\label{ProofMain}

Since we know that the function $R_{\alpha,\beta}$ is continuously differentiable on $\R$ if $\alpha>1$ and $\beta\in(0,\alpha-1)$, we may assume $\beta\geq\alpha-1$ in the study of the uniform H\"{o}lder continuity of $R_{\alpha,\beta}$. To prove Theorem~\ref{Main}, we first need to determine the continuous wavelet transform of~$R_{\alpha,\beta}$ related to the Lusin wavelet, as in~\cite{JMR,Ja,HT} where the case $\alpha=\beta=2$ is treated. 

\begin{Prop}
We have
\begin{eqnarray}\label{Wab}
\mathcal{W}_\psi R_{\alpha,\beta}(a,b)=ia\pi\sum_{n=1}^{+\infty}\frac{e^{i\pi n^\beta(b+ia)}}{n^{\alpha-\beta}}
\end{eqnarray}
for all $a>0$ and $b\in\R$.
\end{Prop}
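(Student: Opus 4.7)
The plan is to compute $\mathcal{W}_\psi R_{\alpha,\beta}(a,b)$ directly by expanding $R_{\alpha,\beta}$ as a series, writing each sine as complex exponentials, and evaluating the resulting contour integrals by residues. The role of the Hardy-space condition $\psi\in H^2(\R)$ will be made concrete: it is what kills the negative-frequency contributions.

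First I would substitute the explicit form of $\psi$ into the definition of the continuous wavelet transform. Since $\overline{\psi}\!\left(\frac{x-b}{a}\right)=\frac{a^2}{\pi(x-b-ia)^2}$, we get
\[
\mathcal{W}_\psi R_{\alpha,\beta}(a,b)=\frac{a}{\pi}\int_\R\frac{R_{\alpha,\beta}(x)}{(x-b-ia)^2}\,dx.
\]
Then I would justify the interchange of sum and integral: since $|\sin(\pi n^\beta x)|\leq 1$ and $\int_\R\frac{dx}{|x-b-ia|^2}=\pi/a$, the double integral is bounded by $\frac{1}{2}\sum_n n^{-\alpha}<+\infty$ (because $\alpha>1$), so Fubini applies.

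Second I would write $\sin(\pi n^\beta x)=\frac{1}{2i}(e^{i\pi n^\beta x}-e^{-i\pi n^\beta x})$ and compute
\[
I_{\pm}(n):=\int_\R\frac{e^{\pm i\pi n^\beta x}}{(x-b-ia)^2}\,dx
\]
by contour integration. For $I_+(n)$, close the contour in the upper half-plane: the integrand decays like $1/R^2$ on the semicircle and $|e^{i\pi n^\beta(x+iy)}|=e^{-\pi n^\beta y}\leq 1$ there, so Jordan's lemma applies. The only singularity inside is the double pole at $z_0=b+ia$, with residue equal to the derivative of $e^{i\pi n^\beta x}$ at $z_0$, namely $i\pi n^\beta e^{i\pi n^\beta(b+ia)}$. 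Hence
\[
I_+(n)=2\pi i\cdot i\pi n^\beta\,e^{i\pi n^\beta(b+ia)}=-2\pi^2 n^\beta\,e^{i\pi n^\beta(b+ia)}.
\]
For $I_-(n)$, close in the lower half-plane, where the integrand is holomorphic, so $I_-(n)=0$. This is where the $H^2$ property of $\psi$ is visible: only positive frequencies contribute.

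Finally I would assemble the pieces. Each term gives
\[
\frac{a}{\pi}\cdot\frac{1}{n^\alpha}\cdot\frac{1}{2i}\bigl(I_+(n)-I_-(n)\bigr)=i\pi a\,\frac{e^{i\pi n^\beta(b+ia)}}{n^{\alpha-\beta}},
\]
and summing over $n\geq 1$ yields~\eqref{Wab}. The resulting series converges absolutely for every $a>0$ because $|e^{i\pi n^\beta(b+ia)}|=e^{-\pi n^\beta a}$ decays faster than any polynomial, which handles in particular the case $\alpha-\beta\leq 0$ (where the factor $n^{\beta-\alpha}$ grows). I do not anticipate a serious obstacle: the only points that require care are the Fubini justification above and the standard Jordan-lemma argument for closing the contour, both of which are routine.
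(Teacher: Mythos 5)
Your proposal is correct and follows essentially the same route as the paper: both exploit the analyticity of the Lusin wavelet to reduce the computation to a residue at the double pole $b+ia$ in the upper half-plane, with the negative-frequency exponentials contributing nothing. The only (cosmetic) difference is that you interchange sum and integral first and evaluate each term $I_\pm(n)$ separately, whereas the paper groups the positive-frequency part into the single holomorphic function $T_{\alpha,\beta}(z)=-i\sum_{n\geq 1}e^{i\pi n^\beta z}/n^\alpha$ on the upper half-plane and applies Cauchy's integral formula once.
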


\begin{proof}
We can write
\[
R_{\alpha,\beta}(x)=\frac{1}{2}\left(T_{\alpha,\beta}(x)-\widetilde{T}_{\alpha,\beta}(x)\right)
\]
for $x\in\R$ with
\[
T_{\alpha,\beta}(x)=-i\sum_{n=1}^{+\infty}\frac{e^{i\pi n^{\beta}x}}{n^\alpha}\qquad\text{and}\qquad\widetilde{T}_{\alpha,\beta}(x)=T_{\alpha,\beta}(-x).
\]
In other words, $R_{\alpha,\beta}$ is the odd part of $T_{\alpha,\beta}$. 

Let us fix $a>0$ and $b\in\R$. We have
\[
\mathcal{W}_\psi T_{\alpha,\beta}(a,b)
=\int_{\R}T_{\alpha,\beta}(x)\,\frac{1}{a}\overline{\psi}\left(\frac{x-b}{a}\right)\,dx
=\frac{a}{\pi}\int_{\R}\frac{T_{\alpha,\beta}(x)}{(x-(b+ia))^2}\,dx.
\]
For $\eta>0$ and $r>0$, let us denote by $\gamma_{\eta,r}$ the closed path formed by the juxtaposition of the two following ones: the first path describes the segment $[-r+i\eta,r+i\eta]$ and the second one the half-circle of center~$i\eta$ and radius $r$ included in $H=\{z\in\C:\Im z>0\}$. The function $T_{\alpha,\beta}$ is holomorphic on $H$ because the series uniformly converges on every compact set of $H$. As the point $b+ia$ is situated inside the curve described by $\gamma_{\eta,r}$ for $\eta\in(0,a)$ and $r>a$, we obtain
\begin{eqnarray*}
\mathcal{W}_\psi T_{\alpha,\beta}(a,b)
&=&\frac{a}{\pi}\lim_{r\to+\infty}\lim_{\eta\to 0^+}\int_{\gamma_{\eta,r}}\frac{T_{\alpha,\beta}(z)}{(z-(b+ia))^2}\,dz\\
&=&2ia\,(DT_{\alpha,\beta})(b+ia)\\
&=&2ia\pi\sum_{n=1}^{+\infty}\frac{e^{i\pi n^\beta(b+ia)}}{n^{\alpha-\beta}},
\end{eqnarray*}
thanks to Cauchy's integral formula. Similarly, the continuous wavelet transform of $\widetilde{T}_{\alpha,\beta}$ is given by
\[
\mathcal{W}_\psi\widetilde{T}_{\alpha,\beta}(a,b)
=\int_{\R}T_{\alpha,\beta}(-x)\,\frac{1}{a}\overline{\psi}\left(\frac{x-b}{a}\right)\,dx
=\frac{a}{\pi}\lim_{r\to+\infty}\lim_{\eta\to 0^+}\int_{\gamma_{\eta,r}}\frac{T_{\alpha,\beta}(z)}{(z-(-b-ia))^2}\,dz=0
\]
by homotopy invariance, because the point $-b-ia$ does not belong to $H$. We thus have the conclusion.
\end{proof}

Let us now analyse $\mathcal{W}_\psi R_{\alpha,\beta}$ in order to study the uniform H\"{o}lder continuity of $R_{\alpha,\beta}$ with Theorem~\ref{CaractUnif}. We have
\begin{equation}\label{WabMaj}
|\mathcal{W}_\psi R_{\alpha,\beta}(a,b)|\leq a\pi\sum_{n=1}^{+\infty}\frac{e^{-a\pi n^\beta}}{n^{\alpha-\beta}}
=|\mathcal{W}_\psi R_{\alpha,\beta}(a,0)|
\end{equation}
for $a>0$ and $b\in\R$. The function $f_{\alpha,\beta}:x\mapsto x^{\beta-\alpha}\,e^{-a\pi x^\beta}$ is differentiable on $(0,+\infty)$ and 
\[
Df_{\alpha,\beta}(x)=e^{-a\pi x^\beta}\,x^{\beta-\alpha-1}\,\left((\beta-\alpha)-a\pi\beta x^\beta\right),\quad x>0.
\]
Then, $f_{\alpha,\beta}$ is decreasing on $(0,+\infty)$ if $\beta\in[\alpha-1,\alpha)$ and on $(((\beta-\alpha)/a\pi\beta)^{1/\beta},+\infty)$ if $\beta\geq\alpha$. The next developments are mainly based on the classical comparison principle between series and integral (when the general term is decreasing), which we recall in the following lemma.
\begin{Lemma}
Let $N\in\N$ and let $f$ be a decreasing and positive function defined on $[N,+\infty)$. The series $\sum_{n=N+1}^{+\infty}f(n)$ converges if and only if $f$ is integrable on $[N,+\infty)$; in this case we have
\[
\int_{N+1}^{+\infty}f(x)\,dx\leq\sum_{n=N+1}^{+\infty}f(n)\leq\int_{N}^{+\infty}f(x)\,dx.
\]
\hfill $\square$
\end{Lemma}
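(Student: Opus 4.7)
The plan is to exploit the monotonicity of $f$ on each unit interval to sandwich $f(n)$ between two integrals, and then sum telescopically. Concretely, since $f$ is decreasing and positive on $[N,+\infty)$, for every integer $n\geq N$ and every $x\in[n,n+1]$ we have
\[
f(n+1)\leq f(x)\leq f(n).
\]
Integrating with respect to $x$ over $[n,n+1]$ yields the basic two-sided bound
\[
f(n+1)\leq\int_n^{n+1}f(x)\,dx\leq f(n),
\]
which is the engine of the whole argument.

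Next I would sum these inequalities. Fixing any integer $M>N$, summing the right-hand inequality for $n=N,\dots,M-1$ and the left-hand inequality for $n=N+1,\dots,M-1$ gives
\[
\sum_{n=N+1}^{M}f(n)\leq\int_{N}^{M}f(x)\,dx\leq\sum_{n=N}^{M-1}f(n).
\]
Since $f\geq 0$ the partial sums $\sum_{n=N+1}^{M}f(n)$ and the integrals $\int_{N}^{M}f(x)\,dx$ are both increasing in $M$, so the equivalence of convergence follows immediately: if the integral converges, the left inequality bounds the partial sums; conversely if the series converges, the right inequality bounds the integrals on $[N,M]$, and integrability on $[N,+\infty)$ follows. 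Letting $M\to+\infty$ in the displayed two-sided bound gives
\[
\sum_{n=N+1}^{+\infty}f(n)\leq\int_{N}^{+\infty}f(x)\,dx,
\]
which is the upper estimate claimed.

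For the lower estimate, I would shift the index in the right-hand inequality: summing $f(n+1)\leq\int_{n}^{n+1}f(x)\,dx$ for $n=N,N+1,\dots$ gives
\[
\sum_{n=N+1}^{+\infty}f(n)\geq\int_{N+1}^{+\infty}f(x)\,dx
\]
after reindexing, provided both sides are finite (which we already know from the equivalence of convergence). Combining the two estimates yields the statement. There is essentially no obstacle here; the only point requiring a little care is making sure the index shifts align correctly so that the integration domain is $[N+1,+\infty)$ on the left and $[N,+\infty)$ on the right, rather than the other way around. Once the per-interval inequality is written down, the rest is bookkeeping.
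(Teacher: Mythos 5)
The paper states this lemma without proof (it is the classical integral comparison test), so there is no ``paper proof'' to compare against; your job was simply to supply the standard argument, and your overall strategy --- the per-interval sandwich $f(n+1)\leq\int_n^{n+1}f(x)\,dx\leq f(n)$, summed telescopically, with monotonicity of partial sums and of $M\mapsto\int_N^M f$ giving the equivalence of convergence --- is exactly right. The two-sided bound $\sum_{n=N+1}^{M}f(n)\leq\int_N^M f(x)\,dx\leq\sum_{n=N}^{M-1}f(n)$ is correct, and letting $M\to+\infty$ does give the upper estimate $\sum_{n=N+1}^{+\infty}f(n)\leq\int_N^{+\infty}f(x)\,dx$.

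However, your last step contains a genuine slip, precisely at the index-shifting point you yourself flagged as delicate. You announce that you will use ``the right-hand inequality'' but then display the left-hand one, $f(n+1)\leq\int_n^{n+1}f(x)\,dx$, and claim that summing it over $n=N,N+1,\dots$ and reindexing yields $\sum_{n=N+1}^{+\infty}f(n)\geq\int_{N+1}^{+\infty}f(x)\,dx$. It does not: that inequality has the series terms on the \emph{smaller} side, so no reindexing of it can produce a lower bound for the series --- summing it only reproduces the upper estimate you already had. The lower estimate must come from the other half of the sandwich, namely $\int_n^{n+1}f(x)\,dx\leq f(n)$, summed over $n\geq N+1$, which gives
\[
\int_{N+1}^{+\infty}f(x)\,dx=\sum_{n=N+1}^{+\infty}\int_n^{n+1}f(x)\,dx\leq\sum_{n=N+1}^{+\infty}f(n).
\]
With that one correction (and the routine observation that a decreasing function is locally Riemann integrable, so all the integrals make sense), your proof is complete and is the standard one.
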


We note that $f_{\alpha,\beta}$ is integrable on $(0,+\infty)$ only if $\beta>\alpha-1$. We therefore split the study of the uniform H\"{o}lder continuity and the calculus of the uniform H\"{o}lder exponent of~$R_{\alpha,\beta}$ into two cases: $\beta>\alpha-1$ and $\beta=\alpha-1$.

\begin{Prop}\label{ExpH}
If $\beta>\alpha-1$, then 
\[
H_{R_{\alpha,\beta}}(\R)=\frac{\alpha-1}{\beta}.
\]
\end{Prop}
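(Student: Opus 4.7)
The strategy is to apply Theorem~\ref{CaractUnif}(1) to the Lusin wavelet transform of~$R_{\alpha,\beta}$, using the pointwise bound (\ref{WabMaj}). The heuristic is that the exponential factor $e^{-a\pi n^\beta}$ effectively truncates the series $S(a):=\sum_{n=1}^{+\infty} n^{\beta-\alpha} e^{-a\pi n^\beta}$ at $n\sim a^{-1/\beta}$, so $S(a)\asymp \int_0^{a^{-1/\beta}} x^{\beta-\alpha}\,dx\asymp a^{(\alpha-1)/\beta-1}$, and multiplying by $a\pi$ yields $|\mathcal{W}_\psi R_{\alpha,\beta}(a,b)|\asymp a^{(\alpha-1)/\beta}$. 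The direct and converse parts of Theorem~\ref{CaractUnif}(1) will then simultaneously produce $H_{R_{\alpha,\beta}}(\R)\geq (\alpha-1)/\beta$ and its optimality, noting that $(\alpha-1)/\beta\in(0,1)$ since $\alpha>1$ and $\beta>\alpha-1$.

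For the upper bound, the substitution $u=a\pi x^\beta$ transforms $\int_0^{+\infty}x^{\beta-\alpha}e^{-a\pi x^\beta}\,dx$ into $\beta^{-1}(a\pi)^{(\alpha-1)/\beta-1}\Gamma((\beta-\alpha+1)/\beta)$, the gamma factor being finite precisely thanks to $\beta>\alpha-1$. To compare $S(a)$ with this integral via the preceding comparison Lemma, I would split into the two subcases already identified in the text. If $\beta\in[\alpha-1,\alpha)$, then $f_{\alpha,\beta}$ is decreasing on $(0,+\infty)$ and the Lemma (with $N=1$) directly gives $S(a)\leq f_{\alpha,\beta}(1)+\int_1^{+\infty}f_{\alpha,\beta}(x)\,dx\leq C\,a^{(\alpha-1)/\beta-1}$. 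If $\beta\geq\alpha$, then $f_{\alpha,\beta}$ increases on $(0,x_0)$ and decreases on $(x_0,+\infty)$ with $x_0=((\beta-\alpha)/(a\pi\beta))^{1/\beta}$; I would split $S(a)$ at $N_0=\lceil x_0\rceil$ and bound the increasing part by $\int_0^{N_0+1}f_{\alpha,\beta}(x)\,dx+f_{\alpha,\beta}(x_0)$ and the decreasing tail by the Lemma, both contributions again being $O(a^{(\alpha-1)/\beta-1})$. Multiplying by $a\pi$ yields $|\mathcal{W}_\psi R_{\alpha,\beta}(a,b)|\leq C\,a^{(\alpha-1)/\beta}$ uniformly in $a>0$ and $b\in\R$, and Theorem~\ref{CaractUnif}(1) gives $R_{\alpha,\beta}\in C^{(\alpha-1)/\beta}(\R)$.

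For optimality, the inequality (\ref{WabMaj}) is in fact an equality at $b=0$, so $|\mathcal{W}_\psi R_{\alpha,\beta}(a,0)|=a\pi\,S(a)$ and it suffices to exhibit a matching lower bound on $S(a)$. Setting $N(a)=\lfloor a^{-1/\beta}\rfloor$, for $1\leq n\leq N(a)$ we have $a\pi n^\beta\leq\pi$, hence $e^{-a\pi n^\beta}\geq e^{-\pi}$ and
\[
S(a)\;\geq\;e^{-\pi}\sum_{n=1}^{N(a)} n^{\beta-\alpha}.
\]
Since $\beta-\alpha>-1$, a further application of the comparison Lemma (or an elementary left/right Riemann sum, depending on the sign of $\beta-\alpha$) produces $\sum_{n=1}^{N(a)}n^{\beta-\alpha}\geq c\,N(a)^{\beta-\alpha+1}\geq c'\,a^{(\alpha-1)/\beta-1}$ for $a$ sufficiently small. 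Therefore $|\mathcal{W}_\psi R_{\alpha,\beta}(a,0)|\geq c''\,a^{(\alpha-1)/\beta}$ for small $a>0$. For any $\alpha'\in(0,1)$ with $\alpha'>(\alpha-1)/\beta$, this lower bound is incompatible with the necessary condition in Theorem~\ref{CaractUnif}(1), so $R_{\alpha,\beta}\notin C^{\alpha'}(\R)$ and equality $H_{R_{\alpha,\beta}}(\R)=(\alpha-1)/\beta$ follows.

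The main obstacle I foresee is the bookkeeping of the upper bound when $\beta\geq\alpha$: the integrand $f_{\alpha,\beta}$ is not monotone on $(0,+\infty)$, so the comparison Lemma cannot be invoked in one shot, and the location $x_0$ of its maximum drifts to $+\infty$ as $a\to 0^+$. The key qualitative point is that the peak value $f_{\alpha,\beta}(x_0)$ and the full integral $\int_0^{+\infty}f_{\alpha,\beta}$ scale on the same order $a^{(\alpha-1)/\beta-1}$, so the anomalous, non-monotone piece of $S(a)$ is absorbed into the same constant; the clean monotone case $\beta\in[\alpha-1,\alpha)$ is essentially immediate, and the lower bound avoids the difficulty entirely by working only on the ``flat'' initial segment $1\leq n\leq N(a)$.
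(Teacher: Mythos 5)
Your proposal is correct and follows essentially the same route as the paper: the same upper bound via the comparison lemma and the Gamma integral $\int_0^{+\infty}x^{\beta-\alpha}e^{-a\pi x^\beta}\,dx$ (splitting the sum at the maximum of $f_{\alpha,\beta}$ when $\beta\geq\alpha$ --- where, as in the paper, the regime of large $a$ should be separated off since the peak bound $f_{\alpha,\beta}(x_0)$ only has the right order for $a\leq 1$), together with a matching lower bound at $b=0$ fed into Theorem~\ref{CaractUnif}(1). The only cosmetic difference is that you obtain the lower bound by keeping the terms $n\leq a^{-1/\beta}$, where $e^{-a\pi n^\beta}\geq e^{-\pi}$, instead of the paper's incomplete Gamma function $\Gamma\left(\frac{\beta-\alpha+1}{\beta},a\pi N_a^\beta\right)$; both yield $|\mathcal{W}_\psi R_{\alpha,\beta}(a,0)|\geq c\,a^{(\alpha-1)/\beta}$ for small $a$, which rules out $C^{\alpha'}(\R)$ for every $\alpha'>(\alpha-1)/\beta$.
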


\begin{proof}
1. Let us first consider the case $\beta\in(\alpha-1,\alpha)$. The function $f_{\alpha,\beta}$ is decreasing on $[1,+\infty)$ and we have
\[
|\mathcal{W}_\psi R_{\alpha,\beta}(a,b)|
\leq a\pi\left(e^{-a\pi}+\sum_{n=2}^{+\infty}\frac{e^{-a\pi n^\beta}}{n^{\alpha-\beta}}\right)
\leq a\pi\left(e^{-a\pi}+\int_1^{+\infty}\frac{e^{-a\pi x^\beta}}{x^{\alpha-\beta}}\,dx\right)
\]
for $a>0$ and $b\in\R$. For the second term of the right hand side of the last inequality, we obtain
\begin{equation}\label{GammaMaj}
\int_1^{+\infty}\frac{e^{-a\pi x^\beta}}{x^{\alpha-\beta}}\,dx
\leq \int_0^{+\infty}\frac{e^{-a\pi x^\beta}}{x^{\alpha-\beta}}\,dx
=\frac{1}{\beta}\pi^{\frac{\alpha-1}{\beta}}\,\Gamma\left(\frac{1+\beta-\alpha}{\beta}\right)\,a^{\frac{\alpha-1}{\beta}-1}
\end{equation}
for $a>0$, where $\Gamma$ is defined by
\[
\Gamma(x)=\int_0^{+\infty}e^{-t}\,t^{x-1}\,dt,\quad x>0,
\]
as usual. For the first term, we note that the function $a\mapsto e^{-a\pi} a^{1-\frac{\alpha-1}{\beta}}$ is bounded on $(0,+\infty)$ because $\alpha-1<\beta$. Then, there exists $C_{\alpha,\beta}>0$ such that
\[
|\mathcal{W}_\psi R_{\alpha,\beta}(a,b)|\leq C_{\alpha,\beta}\,a^{\frac{\alpha-1}{\beta}}
\]
for all $a>0$ and $b\in\R$, which implies $R_{\alpha,\beta}\in C^{\frac{\alpha-1}{\beta}}(\R)$ using Theorem~\ref{CaractUnif}. 

Let us show the optimality of this exponent $(\alpha-1)/\beta$ related to the uniform H\"{o}lder continuity. Let $C>0$ and $\eta>0$; we have
\[
|\mathcal{W}_\psi R_{\alpha,\beta}(a,0)|=a\pi\sum_{n=1}^{+\infty}\frac{e^{-\pi n^\beta a}}{n^{\alpha-\beta}}
\geq a\pi\int_1^{+\infty}\frac{e^{-a\pi x^\beta}}{x^{\alpha-\beta}}\,dx
=\frac{1}{\beta}\,(a\pi)^{\frac{\alpha-1}{\beta}}\,\Gamma\left(\frac{\beta-\alpha+1}{\beta},a\pi\right)
\]
for $a>0$, where $\Gamma$ is the incomplete Gamma function defined by
\[
\Gamma(x,y)=\int_y^{+\infty}e^{-t}t^{x-1}\,dt,\quad (x,y)\in(0,+\infty)\times [0,+\infty).
\]
Let us recall that $\Gamma(x,0)=\Gamma(x)$ and $\Gamma(x,y)$ converges to $\Gamma(x)$ as $y\to 0^+$ for all $x>0$.
Since $\Gamma((\beta-\alpha+1)/\beta,a\pi)\to\Gamma((\beta-\alpha+1)/\beta)$ and $a^\eta\to 0$ as $a\to 0^+$, there exists $A>0$ such that, for all $a\in(0,A)$, we have
\[
|\mathcal{W}_\psi R_{\alpha,\beta}(a,0)|
>C\,a^{\frac{\alpha-1}{\beta}+\eta}.
\]
Hence the conclusion using Theorem~\ref{CaractUnif}.

\smallskip
2. Let us now consider the case $\beta\geq\alpha$ and let us write $N_a=\lfloor((\beta-\alpha)/a\pi\beta)^{1/\beta}\rfloor+1$, where~$\lfloor x\rfloor$ denotes the largest integer smaller than or equal to the real~$x$. If $a>1$, then $N_a=1$ and we can proceed as in the previous case. Let us therefore suppose that $a\in(0,1]$. We have
\begin{eqnarray*}
|\mathcal{W}_\psi R_{\alpha,\beta}(a,b)|&\leq & a\pi\left(\sum_{n=1}^{N_a}\frac{e^{-a\pi n^\beta}}{n^{\alpha-\beta}}+\sum_{n=N_a+1}^{+\infty}\frac{e^{-a\pi n^\beta}}{n^{\alpha-\beta}}\right)\\
&\leq & a\pi\left(N_a\,N_a^{\beta-\alpha}+\int_{N_a}^{+\infty}\frac{e^{-a\pi x^\beta}}{x^{\alpha-\beta}}\,dx\right)\\
&\leq & a\pi\left(\left(\left(\frac{\beta-\alpha}{\pi\beta}\right)^{\frac{1}{\beta}}+a^{\frac{1}{\beta}}\right)^{\beta-\alpha+1}a^{\frac{\alpha-1}{\beta}-1}+\int_{0}^{+\infty}\frac{e^{-a\pi x^\beta}}{x^{\alpha-\beta}}\,dx\right)\\
& \leq & a^{\frac{\alpha-1}{\beta}}\pi\left(\left(\left(\frac{\beta-\alpha}{\pi\beta}\right)^{\frac{1}{\beta}}+1\right)^{\beta-\alpha+1}+\frac{1}{\beta}\pi^{\frac{\alpha-1}{\beta}}\,\Gamma\left(\frac{1+\beta-\alpha}{\beta}\right)\right),
\end{eqnarray*}
where we have used relation~\eqref{GammaMaj} to obtain the last inequality. We then have $R_{\alpha,\beta}\in C^{\frac{\alpha-1}{\beta}}(\R)$ using Theorem~\ref{CaractUnif}.

Let us show the optimality of the exponent related to the uniform H\"{o}lder continuity. Let $C>0$ and $\eta>0$; we have 
\begin{eqnarray*}
\sum_{n=1}^{+\infty}\frac{e^{-\pi n^\beta a}}{n^{\alpha-\beta}}&\geq &
\sum_{n=N_a}^{+\infty}\frac{e^{-\pi n^\beta a}}{n^{\alpha-\beta}}\\
&\geq &
\int_{N_a}^{+\infty}\frac{e^{-a\pi x^\beta}}{x^{\alpha-\beta}}\,dx\\
&=&\frac{1}{\beta}\,(a\pi)^{\frac{\alpha-1}{\beta}-1}\int_{a\pi N_a^\beta}^{+\infty}e^{-u}\,u^{\frac{\beta-\alpha+1}{\beta}-1}\,du\\
&\geq & \frac{1}{\beta}\,(a\pi)^{\frac{\alpha-1}{\beta}-1}\,\Gamma\left(\frac{\beta-\alpha+1}{\beta},\left(\left(\frac{\beta-\alpha}{\beta}\right)^{1/\beta}+(a\pi)^{1/\beta}\right)^\beta\right)
\end{eqnarray*}
for $a>0$. As in the case $\beta\in(\alpha-1,\alpha)$, there exists $A>0$ such that, for all $a\in(0,A)$, we have
\[
|\mathcal{W}_\psi R_{\alpha,\beta}(a,0)|
>C\,a^{\frac{\alpha-1}{\beta}+\eta},
\]
hence the conclusion using once again Theorem~\ref{CaractUnif}.
\end{proof}

\begin{Rem}
In fact, by taking $b=2k$ with $k\in\Z$, we can show that $R_{\alpha,\beta}\in C^{\frac{\alpha-1}{\beta}}(2k)$ and that the exponent cannot be improved because $\mathcal{W}_\psi R_{\alpha,\beta}(a,2k)=\mathcal{W}_\psi R_{\alpha,\beta}(a,0)$ for all $a>0$. In other words, we have
\[
H_{R_{\alpha,\beta}}(2k)=\frac{\alpha-1}{\beta}.
\]
Since this quantity is strictly smaller than $1$, $R_{\alpha,\beta}$ is consequently not differentiable at~$2k$.
\end{Rem}

\begin{Prop}
We have $H_{R_{\alpha,\alpha-1}}(\R)=1$.
\end{Prop}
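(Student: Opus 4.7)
The plan is to apply the upper bound~\eqref{WabMaj} with $\beta=\alpha-1$, namely
\[
|\mathcal{W}_\psi R_{\alpha,\alpha-1}(a,b)|\leq a\pi\sum_{n=1}^{+\infty}\frac{e^{-a\pi n^{\alpha-1}}}{n},
\]
and then feed the result into Theorem~\ref{CaractUnif}~(1). The function $x\mapsto x^{-1}e^{-a\pi x^{\alpha-1}}$ is decreasing on $(0,+\infty)$, so the integral comparison lemma controls the tail by $\int_{1}^{+\infty}\frac{e^{-a\pi x^{\alpha-1}}}{x}\,dx$. I would perform the change of variable $u=a\pi x^{\alpha-1}$, which transforms this integral into $\frac{1}{\alpha-1}\int_{a\pi}^{+\infty}\frac{e^{-u}}{u}\,du$. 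In contrast with the case $\beta>\alpha-1$, this integral is \emph{not} bounded as $a\to 0^+$: it is the exponential integral $E_{1}(a\pi)$, whose behaviour near the origin is $-\ln(a\pi)+O(1)$.

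From this I would deduce the existence of $C>0$ such that
\[
|\mathcal{W}_\psi R_{\alpha,\alpha-1}(a,b)|\leq C\,a\,(1+|\ln a|)
\]
for all $a\in(0,1]$ and $b\in\R$; for $a\geq 1$, the wavelet transform is uniformly bounded by $\|R_{\alpha,\alpha-1}\|_{L^\infty(\R)}\,\|\psi\|_{L^1(\R)}$. Now fix any $\gamma\in(0,1)$. Since $a^{1-\gamma}|\ln a|\to 0$ as $a\to 0^+$, the two regimes combine into a single estimate
\[
|\mathcal{W}_\psi R_{\alpha,\alpha-1}(a,b)|\leq C_\gamma\,a^{\gamma}\qquad(a>0,\,b\in\R)
\]
for a suitable $C_\gamma>0$. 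Theorem~\ref{CaractUnif}~(1) then gives $R_{\alpha,\alpha-1}\in C^{\gamma}(\R)$, and since $\gamma$ was arbitrary in $(0,1)$, we conclude $H_{R_{\alpha,\alpha-1}}(\R)=1$.

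The only real obstacle is identifying the logarithmic blow-up of the exponential integral at $0$; once that is done, the extra factor $a$ in front of the sum absorbs $|\ln a|$ at any rate slower than $1$, which is exactly what is needed to reach every H\"older exponent strictly below~$1$. Structurally, this reflects the fact that at the threshold $\beta=\alpha-1$ the auxiliary integrand $x^{\beta-\alpha}e^{-a\pi x^\beta}$ just fails to be integrable at $0$, so the formal rate $a^{(\alpha-1)/\beta}=a$ is only altered by a harmless logarithmic correction.
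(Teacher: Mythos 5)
Your proof is correct and follows essentially the same route as the paper: both reduce the bound~\eqref{WabMaj} to the exponential integral $E_{1}(a\pi)$, identify its logarithmic blow-up at the origin (the paper uses the explicit two-sided Abramowitz--Stegun bounds where you use the asymptotic $-\ln(a\pi)+O(1)$, an immaterial difference), and let the prefactor $a$ absorb the logarithm to get $C^{\gamma}(\R)$ for every $\gamma\in(0,1)$ via Theorem~\ref{CaractUnif}. The paper additionally proves a lower bound $|\mathcal{W}_\psi R_{\alpha,\alpha-1}(a,0)|\gtrsim a\ln(1+2/(a\pi))$ showing the transform is not $O(a)$, but since $H_f(\R)$ is defined as a supremum over $[0,1)$, that extra step is not needed to conclude $H_{R_{\alpha,\alpha-1}}(\R)=1$, so your argument is complete as written.
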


\begin{proof}
We have
\[
|\mathcal{W}_\psi R_{\alpha,\alpha-1}(a,b)|\leq a\pi\left(e^{-a\pi}+\int_1^{+\infty}\frac{e^{-a\pi x^{\alpha-1}}}{x}\,dx\right)
=a\pi\left(e^{-a\pi}+\frac{1}{\alpha-1}E_1(a\pi)\right)
\]
for $a>0$ and $b\in\R$, where $E_1$ is the exponential integral defined by 
\[
E_1(x)=\int_1^{+\infty}\frac{e^{-xt}}{t}\,dt,\quad x>0.
\]
Since we have
\begin{equation}\label{E1}
\frac{1}{2}\,e^{-x}\,\ln\left(1+\frac{2}{x}\right)<E_1(x)<e^{-x}\ln\left(1+\frac{1}{x}\right)
\end{equation}
for all $x>0$ (see~\cite{AS} p. 229), we obtain
\[
|\mathcal{W}_\psi R_{\alpha,\alpha-1}(a,b)|\leq a\pi\,e^{-a\pi}\left(1+\frac{1}{\alpha-1}\,\ln\left(1+\frac{1}{a\pi}\right)\right)
\]
for $a>0$ and $b\in\R$. Let us fix $\delta\in(0,1)$. There exists $A>0$ such that, for all $a\in(0,A)$, we have
\[
\frac{1}{\alpha-1}\,\frac{\ln\left(1+\frac{1}{a\pi}\right)}{\left(1+\frac{1}{a\pi}\right)^{\delta}}<1
\]
and then
\[
|\mathcal{W}_\psi R_{\alpha,\alpha-1}(a,b)|
\leq a\pi\,e^{-a\pi}\left(1+\left(1+\frac{1}{a\pi}\right)^\delta\right)
\leq a\pi\left(1+2^\delta \left(1+\left(\frac{1}{a\pi}\right)^\delta \right)\right).
\]
There also exists $A'\in(0,A)$ such that, for all $a\in(0,A')$, we have
\[
|\mathcal{W}_\psi R_{\alpha,\alpha-1}(a,b)|\leq C_{\delta}' a^{1-\delta},
\]
where $C_\delta '$ is a positive constant (depending only on $\delta$). Since the function 
\[
a\mapsto a^\delta e^{-a\pi}\left(1+\frac{1}{\alpha-1}\,\ln\left(1+\frac{1}{a\pi}\right)\right)
\]
is bounded on $[A',+\infty)$, we also have
\[
|\mathcal{W}_\psi R_{\alpha,\alpha-1}(a,b)|\leq C_{\delta}''a^{1-\delta}
\]
for $a\in[A',+\infty)$, where $C_\delta''$ is a positive constant. We thus obtain
\[
|\mathcal{W}_\psi R_{\alpha,\alpha-1}(a,b)|\leq C_\delta\,a^{1-\delta}
\]
for all $a>0$ and $b\in\R$ where $C_\delta=\max\{C_\delta',C_\delta''\}$, which implies $R_{\alpha,\alpha-1}\in C^{1-\delta}(\R)$ using Theorem~\ref{CaractUnif}.

Let us now show that this exponent of uniform H\"{o}lder continuity is optimal. Let $C>0$; we have
\[
|\mathcal{W}_\psi R_{\alpha,\alpha-1}(a,0)|\geq a\pi\int_1^{+\infty}\frac{e^{-a\pi x^{\alpha-1}}}{x}\,dx=\frac{a\pi}{\alpha-1} E_1(a\pi)\geq a\,\frac{\pi}{2(\alpha-1)}\,e^{-a\pi}\ln\left(1+\frac{2}{a\pi}\right)
\]
for all $a>0$ thanks to~\eqref{E1} and so, there exists $A>0$ such that, for all $a\in(0,A)$, we have
\[
|\mathcal{W}_\psi R_{\alpha,\alpha-1}(a,0)|>Ca,
\]
hence the conclusion using one last time Theorem~\ref{CaractUnif}.

\end{proof}

\section{Behaviour of~$R_{\alpha,\beta}$ as~$\beta$ increases}\label{Graphic}

If we fix $\alpha>1$, we know that the uniform H\"{o}lder exponent of $R_{\alpha,\beta}$ decreases as~$\beta$ increases, thanks to Theorem~\ref{Main}. Moreover, we know that this exponent is exactly the H\"{o}lder exponent of~$R_{\alpha,\beta}$ at the origin. This phenomenon is clearly illustrated in Figure~\ref{Graphique} in the case $\alpha=2$.
\bigskip

\begin{figure}[!ht]
\centering
\includegraphics[width=7.5cm]{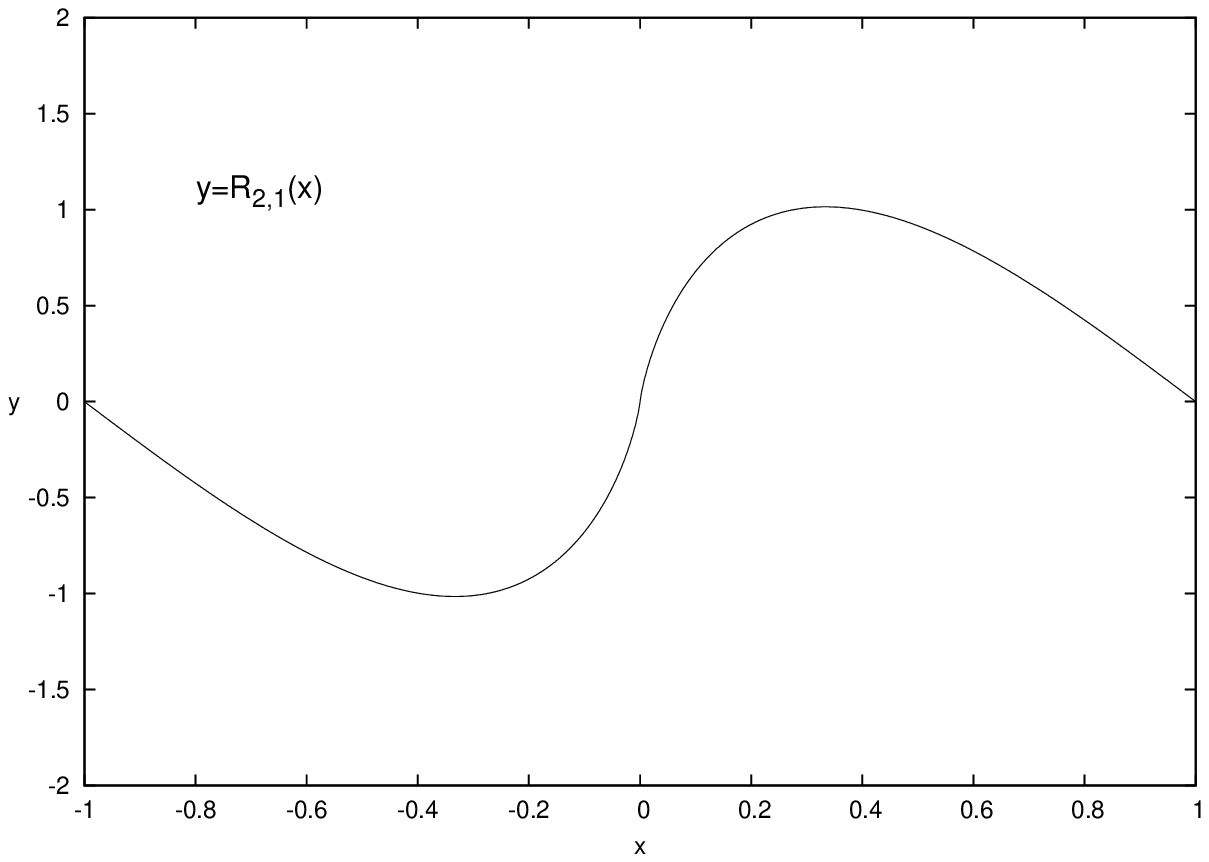}
\includegraphics[width=7.5cm]{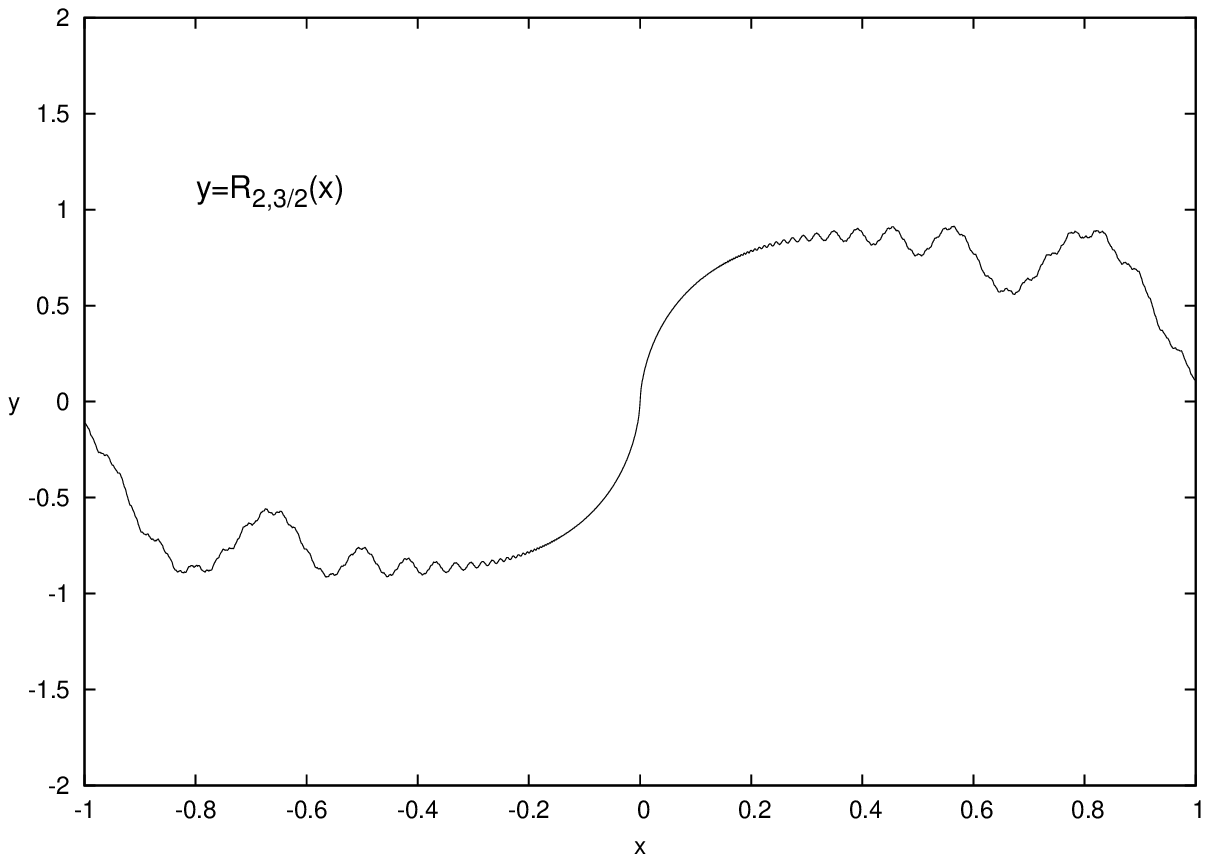}\\
\includegraphics[width=7.5cm]{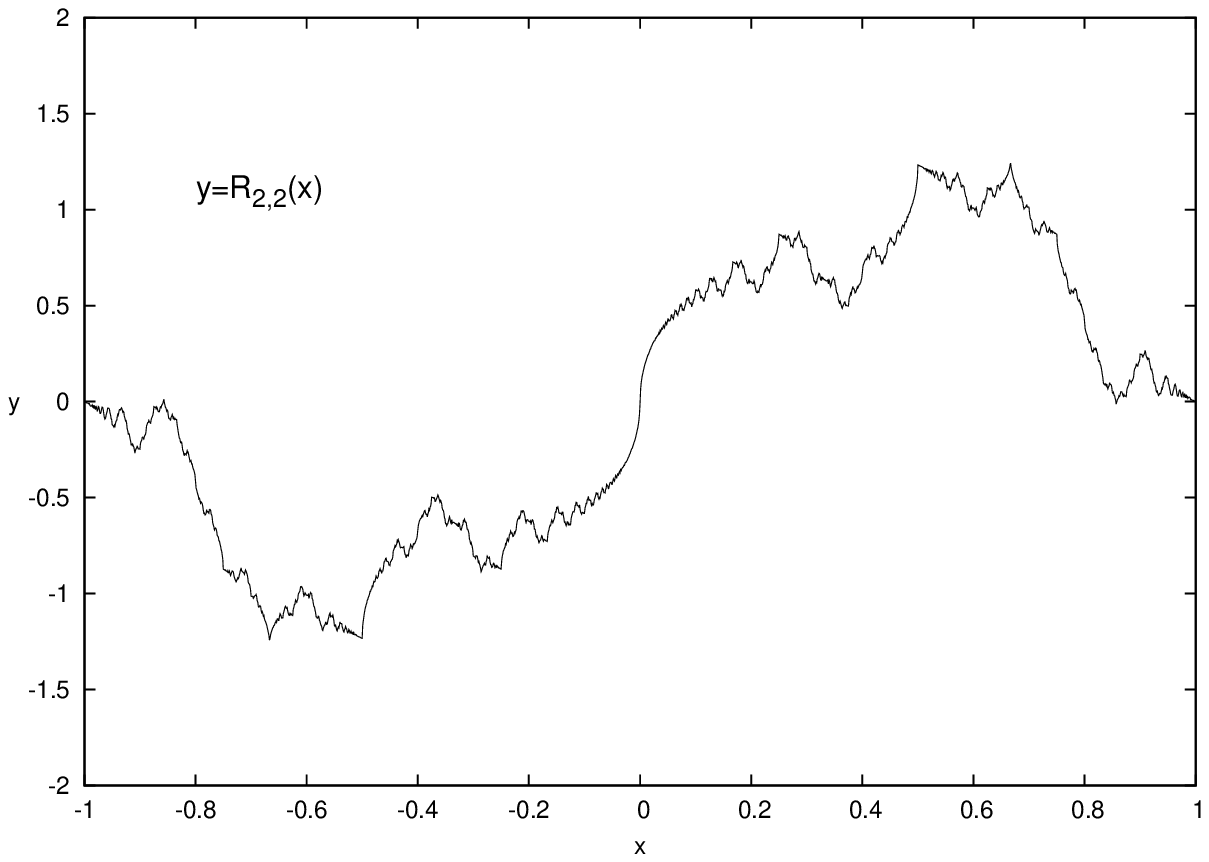}
\includegraphics[width=7.5cm]{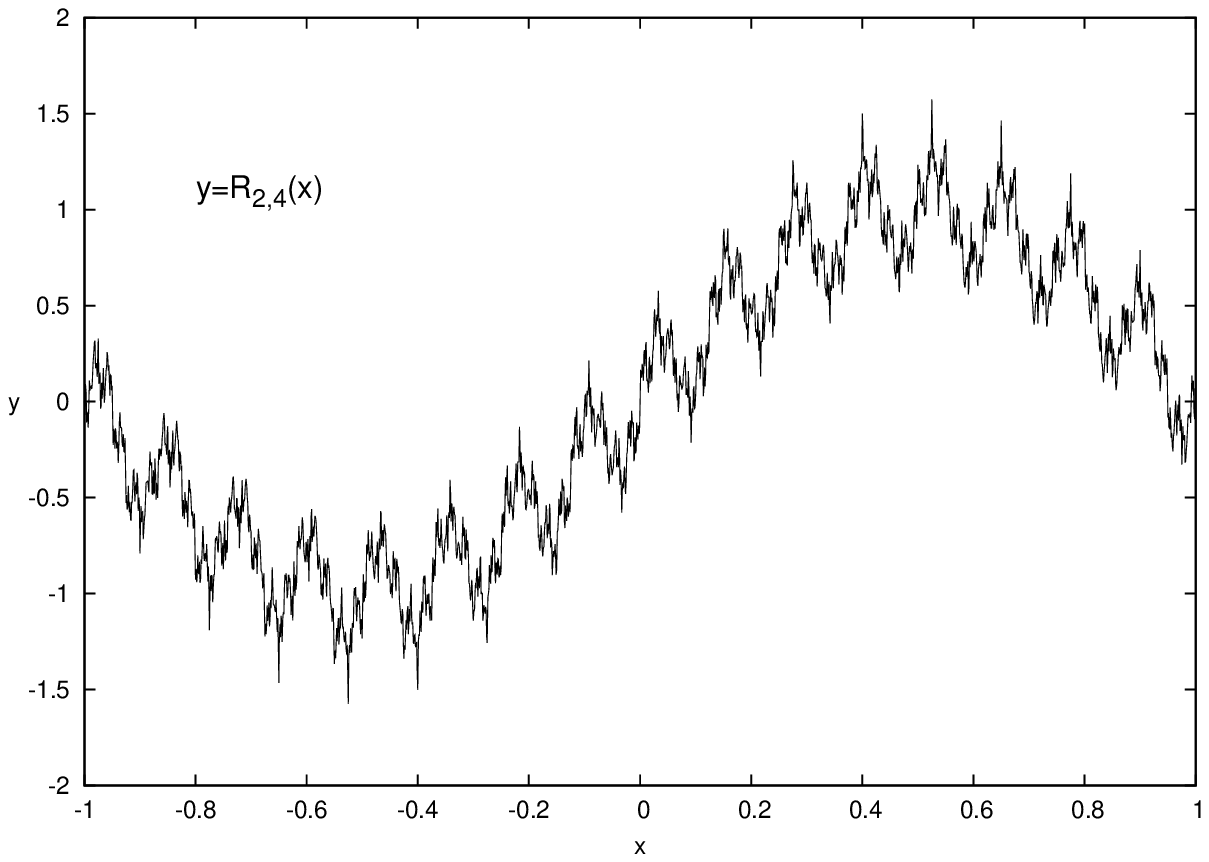}\\
\includegraphics[width=7.5cm]{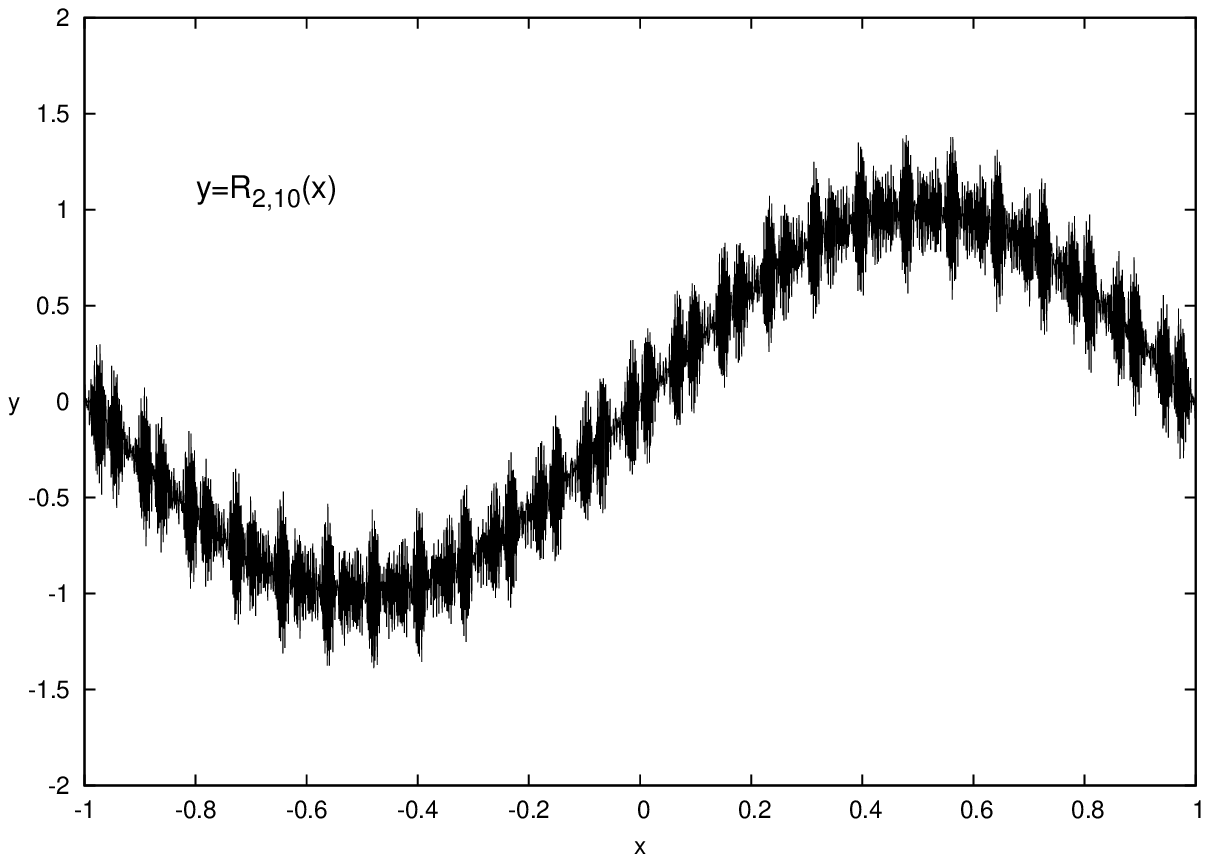}
\caption{\label{Graphique} Graphical representation of $R_{2,1}$, $R_{2,3/2}$, $R_{2,2}$, $R_{2,4}$ and $R_{2,10}$}
\end{figure}

As $\beta$ tends to infinity, we note that the graphical representation of~$R_{\alpha,\beta}$ looks like to the one of the function $s:x\mapsto\sin(\pi x)$ (in a certain sense to establish), with some noise or fluctuations all around. In the next two propositions, we give a convergence result and show that the fluctuations have a constant amplitude (i.e. independent from~$\beta$). To do so, let us recall the usual definition of the mean of an integrable function over a bounded interval.

\begin{Def}
Let $a,b\in\R$ be such that $a<b$ and let $f$ be an integrable function on~$(a,b)$. The {\em mean of the function~$f$ over the inverval~$(a,b)$} is defined by
\[
m_f^{a,b}=\frac{1}{b-a}\int_a^b f(x)\,dx.
\]
\end{Def}

\begin{Prop}\label{Mean}
Let $\alpha>1$. For all $a,b\in\R$ such that $a<b$, we have
\[
\lim_{\beta\to+\infty}m_{R_{\alpha,\beta}}^{a,b}=m_s^{a,b}.
\]
\end{Prop}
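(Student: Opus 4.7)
The plan is to interchange sum and integral, compute explicitly, and isolate the $n=1$ term as $m_s^{a,b}$, leaving a tail that vanishes in $\beta$.

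First, since the series defining $R_{\alpha,\beta}$ is normally convergent on $\R$ (dominated by $\sum n^{-\alpha}$ with $\alpha>1$), Fubini (or termwise integration of a uniformly convergent series) gives
\[
m_{R_{\alpha,\beta}}^{a,b}=\frac{1}{b-a}\sum_{n=1}^{+\infty}\frac{1}{n^\alpha}\int_a^b\sin(\pi n^\beta x)\,dx
=\frac{1}{\pi(b-a)}\sum_{n=1}^{+\infty}\frac{\cos(\pi n^\beta a)-\cos(\pi n^\beta b)}{n^{\alpha+\beta}}.
\]
Extracting the $n=1$ term and noting that
\[
m_s^{a,b}=\frac{1}{b-a}\int_a^b\sin(\pi x)\,dx=\frac{\cos(\pi a)-\cos(\pi b)}{\pi(b-a)},
\]
we obtain
\[
m_{R_{\alpha,\beta}}^{a,b}-m_s^{a,b}=\frac{1}{\pi(b-a)}\sum_{n=2}^{+\infty}\frac{\cos(\pi n^\beta a)-\cos(\pi n^\beta b)}{n^{\alpha+\beta}}.
\]

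Next, I bound the tail crudely: $|\cos(\pi n^\beta a)-\cos(\pi n^\beta b)|\leq 2$, so
\[
\left|m_{R_{\alpha,\beta}}^{a,b}-m_s^{a,b}\right|\leq \frac{2}{\pi(b-a)}\sum_{n=2}^{+\infty}\frac{1}{n^{\alpha+\beta}}
\leq \frac{2}{\pi(b-a)}\int_{1}^{+\infty}\frac{dx}{x^{\alpha+\beta}}=\frac{2}{\pi(b-a)(\alpha+\beta-1)}.
\]
The right-hand side tends to $0$ as $\beta\to+\infty$, which yields the claim.

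There is essentially no obstacle here: the only point requiring care is the interchange of sum and integral, which is immediate from normal convergence since $\alpha>1$. The cancellation mechanism is simply that for $n\geq 2$ the weight $n^{-(\alpha+\beta)}$ becomes negligible uniformly in $x$ as $\beta$ grows, so only the first harmonic survives in the mean.
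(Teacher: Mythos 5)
Your proof is correct and follows essentially the same route as the paper: termwise integration isolates the $n=1$ term as $m_s^{a,b}$, and the remaining tail is bounded by $\tfrac{2}{\pi(b-a)}\sum_{n\geq 2}n^{-(\alpha+\beta)}$, which tends to $0$ as $\beta\to+\infty$. The only cosmetic difference is that you estimate this tail by an integral comparison giving $\tfrac{1}{\alpha+\beta-1}$, whereas the paper simply writes it as $\zeta(\alpha+\beta)-1$ and invokes $\zeta(x)\to 1$.
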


\begin{proof}
We have
\[
\left|\int_a^b(R_{\alpha,\beta}(x)-\sin(\pi x))\,dx\right|
=\left|\sum_{n=2}^{+\infty}\frac{\cos(\pi n^\beta a)-\cos(\pi n^\beta b)}{\pi n^{\alpha+\beta}}\right|\leq\frac{2}{\pi}(\zeta(\alpha+\beta)-1)
\]
and we know that $\zeta(x)\to 1$ as $x\to+\infty$, hence the conclusion.
\end{proof}

\begin{Prop}\label{Fluctuations}
Let $\alpha>1$ and let $\beta\in\N\setminus\{0\}$. The function $R_{\alpha,\beta}$ is periodic of period~$2$ and we have
\[
\int_{-1}^{1}\left(R_{\alpha,\beta}(x)-\sin(\pi x)\right)^2\,dx=\zeta(2\alpha)-1.
\]
\end{Prop}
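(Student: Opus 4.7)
For the periodicity, I would simply observe that since $\beta$ is a positive integer, $n^\beta\in\N$ for every $n\geq 1$, so $\sin(\pi n^\beta(x+2))=\sin(\pi n^\beta x + 2\pi n^\beta)=\sin(\pi n^\beta x)$, and the identity persists after summing over $n$.

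For the integral, the plan is to reduce to Parseval-type orthogonality on $[-1,1]$. I would first rewrite
\[
R_{\alpha,\beta}(x)-\sin(\pi x)=\sum_{n=2}^{+\infty}\frac{\sin(\pi n^\beta x)}{n^\alpha},
\]
and note that this series converges uniformly on $\R$ (dominated by $\sum 1/n^\alpha<+\infty$ since $\alpha>1$). Uniform convergence also holds for the squared partial sums on $[-1,1]$, so I can interchange the limit with the integral:
\[
\int_{-1}^{1}\bigl(R_{\alpha,\beta}(x)-\sin(\pi x)\bigr)^2 dx
=\lim_{N\to+\infty}\sum_{m,n=2}^{N}\frac{1}{m^\alpha n^\alpha}\int_{-1}^{1}\sin(\pi m^\beta x)\sin(\pi n^\beta x)\,dx.
\]

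The key computation is then the orthogonality relation. For $m,n\geq 1$ positive integers, the product-to-sum formula gives
\[
\int_{-1}^{1}\sin(\pi m^\beta x)\sin(\pi n^\beta x)\,dx
=\tfrac{1}{2}\int_{-1}^{1}\bigl(\cos(\pi(m^\beta-n^\beta)x)-\cos(\pi(m^\beta+n^\beta)x)\bigr)dx.
\]
Since $\beta\in\N\setminus\{0\}$, the map $n\mapsto n^\beta$ is injective on the positive integers, so $m^\beta\neq n^\beta$ whenever $m\neq n$; both cosines then have nonzero integer $\pi$-frequency and integrate to zero over $[-1,1]$. When $m=n$, the half-angle formula gives $\int_{-1}^{1}\sin^2(\pi n^\beta x)dx=1$. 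Hence only the diagonal survives and the double sum collapses to
\[
\sum_{n=2}^{+\infty}\frac{1}{n^{2\alpha}}=\zeta(2\alpha)-1.
\]

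There is no real obstacle here: the only point requiring care is the interchange of integral and infinite sum, which is handled by the uniform convergence of the series (guaranteed by $\alpha>1$). The integrality of $\beta$ is crucial for both the periodicity and the orthogonality; it is precisely what ensures that each frequency $\pi n^\beta$ is commensurable with the interval of length $2$.
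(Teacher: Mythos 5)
Your proof is correct and rests on the same fact as the paper's: the functions $\sin(\pi m^\beta x)$, $m\geq 2$, are pairwise orthogonal (and of norm squared $1$) on $[-1,1]$ because $m\mapsto m^\beta$ is injective on the positive integers, so the frequencies $m^\beta\pm n^\beta$ are nonzero integers for $m\neq n$. The paper packages this as a Parseval identity for the Fourier series of $R_{\alpha,\beta}-\sin(\pi\,\cdot)$, whose only nonvanishing coefficients sit at the frequencies $k^\beta$, whereas you expand the square directly and justify the interchange of sum and integral by uniform convergence; the two computations are essentially the same.
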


\begin{proof}
The periodicity of~$R_{\alpha,\beta}$ is easy to check. Let us calculate the integral. By developing $x\mapsto R_{\alpha,\beta}(x)-\sin(\pi x)$ in Fourier series, we have
\[
R_{\alpha,\beta}(x)-\sin(\pi x)=\frac{a_0}{2}+\sum_{m=1}^{+\infty}\left(a_m\cos(\pi m x)+b_m\sin(\pi m x)\right)
\]
in $L^2([-1,1])$ where $a_0=a_m=0$ and
\begin{eqnarray*}
b_m&=&2\int_0^1(R_{\alpha,\beta}(x)-\sin(\pi x))\,dx\\
&=&\sum_{n=2}^{+\infty}\frac{1}{n^\alpha}\int_0^1\left(\cos(x\pi(n^\beta-m))-\cos(x\pi(n^\beta+m))\right)\,dx\\
&=&\left\{
\begin{array}{ll}\vspace{1.5ex}
\displaystyle\frac{1}{m^{\alpha/\beta}}&\text{if $m=k^\beta$ for one $k\in\N\setminus\{0,1\}$}\\
0&\text{otherwise}
\end{array}
\right.
\end{eqnarray*}
for all $m\in\N\setminus\{0\}$. Consequently, by Parseval formula, we obtain
\[
\int_{-1}^{1}\left(R_{\alpha,\beta}(x)-\sin(\pi x)\right)^2\,dx
=\sum_{m=1}^{+\infty} b_m^2=\sum_{k=2}^{+\infty}\frac{1}{k^{2\alpha}}=\zeta(2\alpha)-1.
\]
\end{proof}

The two previous propositions are illustrated in Figure~\ref{GraphiqueBeta}. Let us end this section with a simple remark about the behaviour of~$R_{\alpha,\beta}$ as $\alpha$ tends to infinity.

\begin{figure}[!ht]
\centering
\includegraphics[width=7.5cm]{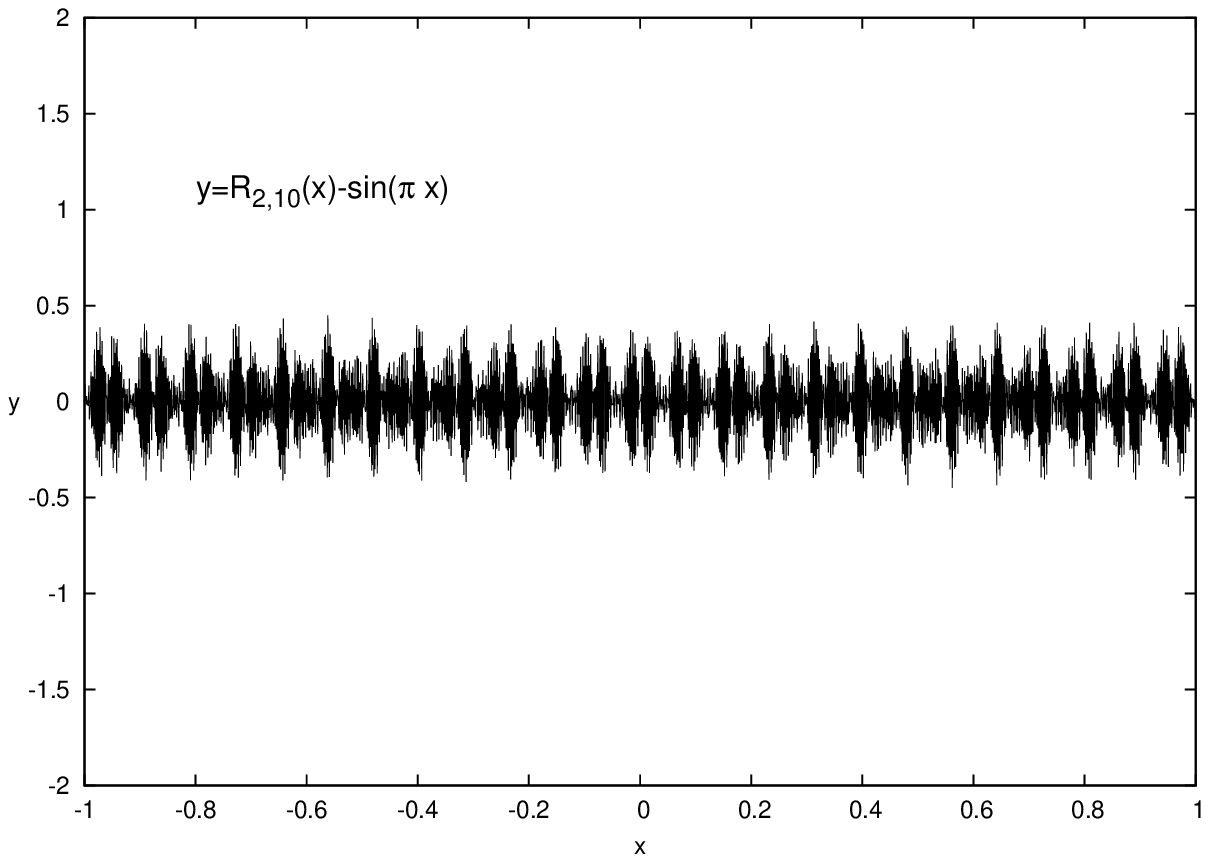}
\includegraphics[width=7.5cm]{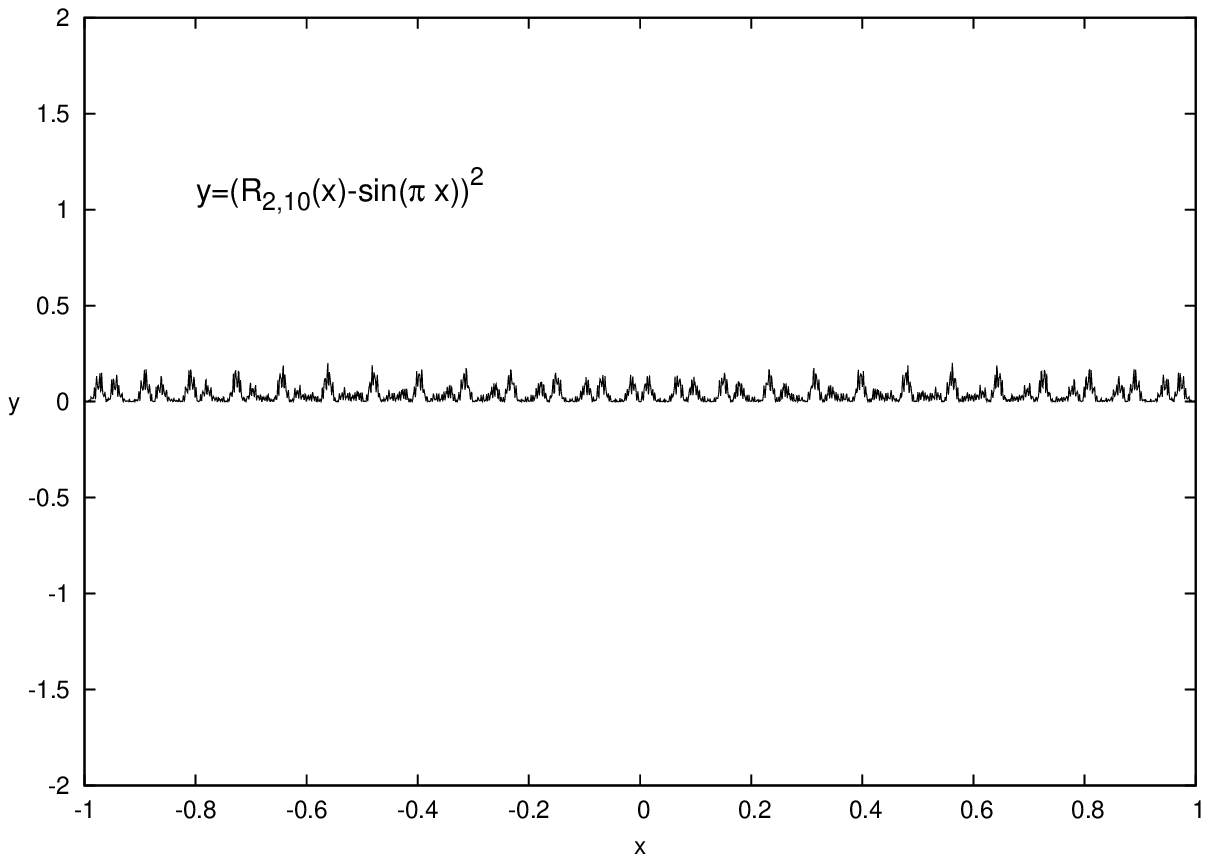}\\
\caption{\label{GraphiqueBeta} Mean value and amplitude of fluctuations of $x\mapsto R_{2,10}(x)-\sin(\pi x)$}
\end{figure}

\begin{Rem}
Proposition~\ref{Mean} is also ``satisfied'' for~$\alpha$: we have
\[
\lim_{\alpha\to +\infty}m_{R_{\alpha,\beta}}^{a,b}=m_s^{a,b}
\]
for all $\beta>0$ and all $a,b\in\R$ such that $a<b$. Moreover, by Proposition~\ref{Fluctuations}, we have
\[
\lim_{\alpha\to+\infty}\int_{-1}^1\left(R_{\alpha,\beta}(x)-\sin(\pi x)\right)^2\,dx=0
\]
for all~$\beta\in\N\setminus\{0\}$. In fact, a stronger result holds: for any fixed $\beta>0$, $R_{\alpha,\beta}$ uniformly converges on~$\R$ to~$s$ as $\alpha$ tends to infinity because we have
\[
\left|R_{\alpha,\beta}(x)-\sin(\pi x)\right|\leq\sum_{n=2}^{+\infty}\frac{1}{n^\alpha}=\zeta(\alpha)-1
\]
for all $x\in\R$. 
\end{Rem}

\section{Final remarks}\label{FinalRem}

\subsection{About nonharmonic Fourier series}

A part of Theorem~\ref{Main} can be adapted for particular nonharmonic Fourier series. Let us first recall the notion of nonharmonic Fourier series (see~\cite{L,Y,Ja1}). 

\begin{Def}
Let $\boldsymbol{a}=(a_n)_{n\in\N\setminus\{0\}}$ be a sequence of complex numbers and let $\boldsymbol{\lambda}=(\lambda_n)_{n\in\N\setminus\{0\}}$ be an increasing sequence of positive numbers which converges to infinity. A {\em nonharmonic Fourier series} (related to the sequences $\boldsymbol{a}$ and $\boldsymbol{\lambda}$) is a function $S$ defined by 
\[
S(x)=\sum_{n=1}^{+\infty}a_n\,e^{i\lambda_n x},\quad x\in\R,
\]
if the series converges.
\end{Def}

\noindent If the series $\sum_{n=1}^{+\infty}a_n$ is absolutely convergent, then the above series (related to~$S$) uniformly converges on~$\R$. We will assume that this is the case in the remainder of this discussion. Such a function $S$ is then continuous and bounded on $\R$. As for~$R_{\alpha,\beta}$, we can calculate the continuous wavelet transform of $S$ (related to the Lusin wavelet).

\bigskip
Since $\lambda_n>0$ for all $n\in\N\setminus\{0\}$, $S$ is a holomorphic function on~$H$ and we have
\[
\mathcal{W}_\psi S(a,b)=-2a\sum_{n=1}^{+\infty} a_n \lambda_n\,e^{i\lambda_n (b+ia)}
\]
for $a>0$ and $b\in\R$, similarly to~\eqref{Wab}. If we assume that there exist positive constants $C_1$, $C_2$ and $C_3$, $\alpha>1$ and $\beta>0$ such that
\[
|a_n|\leq \frac{C_1}{n^\alpha}\quad\text{and}\quad C_2 n^\beta\leq \lambda_n\leq C_3 n^\beta
\]
for all $n\in\N\setminus\{0\}$, we then obtain
\[
|\mathcal{W}_\psi S(a,b)|\leq 2a C_1 C_3\sum_{n=1}^{+\infty} \frac{e^{-C_2 a n^\beta}}{n^{\alpha-\beta}}
\]
for $a>0$ and $b\in\R$ and we recover an expression similar to the one obtained for~$|\mathcal{W}_\psi R_{\alpha,\beta}(a,b)|$ in~\eqref{WabMaj}. Using the same reasoning as in the study of the uniform H\"{o}lder continuity of~$R_{\alpha,\beta}$ with $\alpha>1$ and $\beta\geq\alpha-1$, we can formulate the following result.

\begin{Cor}
With the previous assumptions on~$\boldsymbol{a}$ and~$\boldsymbol{\lambda}$, we have $S\in C^{\frac{\alpha-1}{\beta}}(\R)$ if $\beta>\alpha-1$ and $S\in C^{1-\delta}(\R)$ for all $\delta\in(0,1)$ if $\beta=\alpha-1$.
\end{Cor}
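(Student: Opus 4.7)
The plan is to mimic the upper-bound halves of the proofs of Proposition~\ref{ExpH} and of the proposition establishing $H_{R_{\alpha,\alpha-1}}(\R)=1$: since the Corollary asks only for membership in Hölder classes, I only need the sufficient condition in Theorem~\ref{CaractUnif}(1), which spares me from producing any matching lower bound on $\mathcal{W}_\psi S$. Before invoking that characterization, I would verify that $S$ fits the framework of Theorems~\ref{LusinRecons} and~\ref{CaractUnif}: continuity and boundedness follow from the uniform convergence of the series, and weak oscillation around the origin follows from term-by-term integration, which yields
\[
\left|\frac{1}{2r}\int_{x-r}^{x+r}S(t)\,dt\right|\leq\frac{1}{r}\sum_{n=1}^{+\infty}\frac{|a_n|}{\lambda_n}\leq\frac{C_1\,\zeta(\alpha+\beta)}{C_2\,r},
\]
uniformly in $x\in\R$. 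The starting point of the analysis itself is the inequality
\[
|\mathcal{W}_\psi S(a,b)|\leq 2aC_1C_3\sum_{n=1}^{+\infty}\frac{e^{-C_2 a n^\beta}}{n^{\alpha-\beta}},\qquad a>0,\ b\in\R,
\]
which is a rescaled copy of~\eqref{WabMaj}, with $\pi$ replaced by $C_2$ and an extra multiplicative factor $2C_1C_3$.

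For the case $\beta>\alpha-1$, I would transcribe the proof of Proposition~\ref{ExpH}. When $\beta\in(\alpha-1,\alpha)$ the function $x\mapsto x^{\beta-\alpha}e^{-C_2ax^\beta}$ is decreasing on $[1,+\infty)$, so the series is majorized by $e^{-C_2a}+\int_0^{+\infty}x^{\beta-\alpha}e^{-C_2ax^\beta}\,dx$; the change of variable $u=C_2ax^\beta$ converts the integral into a constant multiple of $a^{(\alpha-1)/\beta-1}\,\Gamma((1+\beta-\alpha)/\beta)$, and since $a\mapsto a^{1-(\alpha-1)/\beta}e^{-C_2 a}$ is bounded on $(0,+\infty)$, this yields $|\mathcal{W}_\psi S(a,b)|\leq C\,a^{(\alpha-1)/\beta}$. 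When $\beta\geq\alpha$ the same splitting as in part~2 of the cited proof applies: set $N_a=\lfloor((\beta-\alpha)/(C_2 a \beta))^{1/\beta}\rfloor+1$, bound the block $n\leq N_a$ trivially by $N_a\cdot N_a^{\beta-\alpha}$ and the tail by $\int_{N_a}^{+\infty}x^{\beta-\alpha}e^{-C_2ax^\beta}\,dx$, which is again controlled by the same Gamma integral. Both cases give $S\in C^{(\alpha-1)/\beta}(\R)$ via Theorem~\ref{CaractUnif}(1).

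For the boundary case $\beta=\alpha-1$, I would follow the proof of $H_{R_{\alpha,\alpha-1}}(\R)=1$ line by line: the series reduces to $e^{-C_2a}+\tfrac{1}{\alpha-1}E_1(C_2a)$, and the upper bound $E_1(x)<e^{-x}\ln(1+1/x)$ recalled in~\eqref{E1} gives an expression of order $a\,e^{-C_2a}\bigl(1+\tfrac{1}{\alpha-1}\ln(1+1/(C_2a))\bigr)$. For any fixed $\delta\in(0,1)$, the logarithm near $0$ can be absorbed into $(C_2a)^{-\delta}$ while the exponential decay keeps the expression under control on $[A',+\infty)$ for a suitable threshold $A'$; patching the two pieces produces $|\mathcal{W}_\psi S(a,b)|\leq C_\delta\,a^{1-\delta}$ on $(0,+\infty)$, and Theorem~\ref{CaractUnif}(1) gives $S\in C^{1-\delta}(\R)$.

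I do not expect any genuine obstacle, since the estimates in Section~\ref{ProofMain} depend only on the asymptotic orders $n^{-\alpha}$ and $n^\beta$ that the hypotheses $|a_n|\leq C_1/n^\alpha$ and $C_2n^\beta\leq\lambda_n\leq C_3n^\beta$ precisely encode; the entire argument propagates through with the three constants tracked in the obvious way. The only mildly delicate step is once again the case $\beta\geq\alpha$, where the integrand is not monotone on all of $(0,+\infty)$ and must be split at $N_a$, but this is exactly the move already made for $R_{\alpha,\beta}$ and it carries over verbatim.
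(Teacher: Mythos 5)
Your proposal is correct and follows exactly the route the paper intends: the paper itself only derives the bound $|\mathcal{W}_\psi S(a,b)|\leq 2aC_1C_3\sum_{n\geq1} e^{-C_2an^\beta}n^{\beta-\alpha}$ and then states that ``the same reasoning'' as in Section~\ref{ProofMain} applies, which is precisely the transcription (with $\pi$ replaced by $C_2$ and the constants $C_1,C_3$ tracked) that you carry out. Your explicit verification of the weak-oscillation hypothesis for $S$, needed for the sufficient direction of Theorem~\ref{CaractUnif}, is a welcome detail the paper leaves implicit.
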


\subsection{About the Lusin wavelet}

If $\alpha=\beta=2$, we know that the largest H\"{o}lder exponent of $R=R_{2,2}$ at a point is $3/2$ and that it is attained at the rational numbers $(2p+1)/(2q+1)$ with $p\in\Z$ and $q\in\N$ (see~\cite{JM}). The continuous wavelet transform related to the Lusin wavelet of $R$ does not allow to find this exponent.
\smallskip

Indeed, for $a>0$, we have
\[
\mathcal{W}_\psi R(a,1)=ia\pi\sum_{n=1}^{+\infty}e^{i\pi n^2(1+ia)}=ia\pi\sum_{n=1}^{+\infty}(-1)^n e^{-a\pi n^2}
=\frac{ia\pi}{2}\left(\sum_{n\in\Z} e^{i\pi n}\, e^{-a\pi n^2} -1\right)
\]
and, by the Poisson summation formula,
\[
|\mathcal{W}_\psi R(a,1)|=\frac{a\pi}{2}\left|\sum_{n\in\Z} \frac{1}{\sqrt{a}}e^{-\frac{(\pi+n)^2}{4a\pi}}-1\right|=\frac{a\pi}{2}\left|\frac{e^{-\frac{\pi}{4a}}}{\sqrt{a}}\left(1+2\sum_{n=1}^{+\infty} e^{-\frac{n^2}{4a\pi}}\cosh\left(\frac{n}{2a}\right)\right)-1\right|.
\]
Let $C>0$ and $\eta>0$. We have
\[
\lim_{a\to 0^+}\frac{e^{-\frac{\pi}{4a}}}{\sqrt{a}}\left(1+2\sum_{n=1}^{+\infty} e^{-\frac{n^2}{4a\pi}}\cosh\left(\frac{n}{2a}\right)\right)=0
\] 
since we have
\[
2\sum_{n=8}^{+\infty} e^{-\frac{n^2}{4a\pi}}\cosh\left(\frac{n}{2a}\right)
\leq\int_{7}^{+\infty} e^{-\frac{x^2}{4a\pi}}\left(1+e^{\frac{x}{2a}}\right)\,dx
\leq \pi\sqrt{a}+\int_{7}^{+\infty}e^{-\frac{1}{2}(\frac{x^2}{2\pi}-x)}\,dx
\]
for all $a\in(0,1)$. The sum begins with the term related to $n=8$ for two reasons. On the one hand, the function $g:x\mapsto e^{-\frac{x^2}{4a\pi}}\cosh\left(\frac{x}{2a}\right)$ is differentiable on $\R$ and
\[
Dg(x)=\frac{e^{-\frac{x^2}{4a\pi}}}{2a}\left(-\frac{x}{\pi}\cosh\left(\frac{x}{2a}\right)+\sinh\left(\frac{x}{2a}\right)\right)\leq0\quad\Leftrightarrow\quad x\geq\pi\tanh\left(\frac{x}{2a}\right),
\]
which implies that $g$ is decreasing on $[4,+\infty)$. On the other hand, the function $x\mapsto \frac{x^2}{2\pi}-x$ is positive on $[7,+\infty)$. Consequently, there exists $A\in(0,1)$ such that, for all $a\in(0,A)$, we have
\[
\frac{\pi}{2C}\left|\sum_{n\in\Z} \frac{1}{\sqrt{a}}e^{-\frac{(\pi+n)^2}{4a\pi}}-1\right|>a^{\eta}
\]
and then
\begin{equation}\label{Lusin32}
|\mathcal{W}_\psi R(a,1)|>Ca^{1+\eta}.
\end{equation}
\smallskip

In fact, the Lusin wavelet has only one vanishing moment since $\hat{\psi}(0)=0$ and $(D\hat{\psi})(0)\neq 0$, because the function $x\mapsto x\psi(x)$ is not integrable on $\R$. Inequality~\eqref{Lusin32} thus shows that the second vanishing moment is essential for the study of the H\"{o}lder continuity of $R$ when the exponent is (strictly) greater than $1$. We could otherwise find $D>0$ and $\delta>0$ such that
\[
|\mathcal{W}_\psi R(a,b)|\leq D\,a^{3/2}\left(1+\left(\frac{|b-1|}{a}\right)^{3/2}\right)
\]
for all $a\in(0,\delta)$ and $b\in(1-\delta,1+\delta)$ and then $|\mathcal{W}_\psi R(a,1)|\leq D\,a^{3/2}$ for all $a\in(0,\delta)$, which is in contradiction with~\eqref{Lusin32} by taking $C=D$, $\eta=1/2$ and $a\in(0,\min\{\delta,A\})$.

\section{Appendix}

Let us give a proof of Theorem~\ref{LusinRecons}. It is based on the ideas of~\cite{Da,HT,Ho} and adapted to the case of the continuous wavelet transform related to a wavelet~$\psi$ which belongs to~$H^2(\R)$.

\begin{proof}[Proof of Theorem~\ref{LusinRecons}]
Let us fix $x\in\R$ and $r>\varepsilon>0$. We write
\[
f_{\varepsilon,r}(x)=\int_\varepsilon^r\left(\int_{-\infty}^{+\infty}\mathcal{W}_\psi f(a,b)\,\frac{1}{a}\varphi\left(\frac{x-b}{a}\right)\,db\right)\frac{1}{a}\,da
\]
and we have
\[
f_{\varepsilon,r}(x)=(M_{\varepsilon,r}\star f)(x)
\]
by Fubini's theorem, where $M_{\varepsilon,r}$ is defined by
\[
M_{\varepsilon,r}(t)=\int_\varepsilon^r\left(\int_{-\infty}^{+\infty}\overline{\psi}\left(-\frac{b}{a}\right)\varphi\left(\frac{t-b}{a}\right)\,db\right)\frac{1}{a^3}\,da,\quad t\in\R.
\]

Since $M_{\varepsilon,r}\in L^1(\R)$ and the support of $\hat{\psi}$ is included in $(0,+\infty)$, we have
\[
\hat{M}_{\varepsilon,r}(\xi)
=\int_\varepsilon^r \overline{\hat{\psi}}(a\xi)\hat{\varphi}(a\xi)\frac{1}{a}\,da
=\left\{
\begin{array}{ll}
0&\text{if}\;\xi\leq 0\\
\displaystyle\int_{\varepsilon\xi}^{r\xi}\overline{\hat{\psi}}(a)\hat{\varphi}(a)\frac{1}{a}\,da&\text{if}\;\xi>0
\end{array}
\right..
\]
Moreover, we have
\begin{equation}\label{M}
\hat{M}_{\varepsilon,r}(\xi)
=m(\varepsilon\xi)-m(r\xi)
\end{equation}
for all $\xi\in\R$, where $m$ is defined by
\[
m(\xi)
=\left\{\begin{array}{ll}
\displaystyle\int_\xi^{+\infty}\overline{\hat{\psi}}(a)\hat{\varphi}(a)\frac{1}{a}\,da&\text{if}\;\xi\geq 0\\
\displaystyle\int_{-\xi}^{+\infty}\overline{\hat{\psi}}(-a)\hat{\varphi}(-a)\frac{1}{a}\,da&\text{if}\;\xi< 0
\end{array}\right..
\]
It is easy to check that $m(0)=1$, $m=0$ on $(-\infty,0)$ and that $m$ is continuous only on~$\R\setminus\{0\}$. Since we have the three following properties: $\hat{\psi}$ is bounded, $\varphi$ is differentiable and $D\varphi\in L^2(\R)$, we obtain
\[
|m(\xi)|
\leq \left(\int_0^{+\infty}|a\hat{\varphi}(a)|^2da\right)^{1/2}\left(\int_\xi^{+\infty}\frac{|\hat{\psi}(a)|^2}{a^4}\,da\right)^{1/2}
\leq \frac{C'}{\xi^{3/2}}
\]
for all $\xi>0$, by Cauchy-Schwarz inequality, where $C'$ is a positive constant. Then, $m$ is bounded and there exists $C>0$ such that
\[
|m(\xi)|\leq\frac{C}{(1+|\xi|)^{3/2}}
\]
for all $\xi\in\R$. So $m\in L^1(\R)\cap L^2(\R)$ and we can define $M$ by $M(\xi)=\hat{m}(-\xi)/\pi$ for all $\xi\in\R$. By definition, $M$ is continuous and bounded on $\R$.

Moreover, $m$ is differentiable on~$\R\setminus\{0\}$ and
\[
Dm(\xi)=\left\{
\begin{array}{ll}
0&\text{if}\;\xi<0\\
\displaystyle-\overline{\hat{\psi}}(\xi)\hat{\varphi}(\xi)\frac{1}{\xi}&\text{if}\;\xi>0
\end{array}
\right..
\]
Since $\hat{\varphi}(0)=0$ and $x\mapsto x\varphi(x)$ is integrable on~$\R$, we have
\[
\hat{\varphi}(\xi)
=\left|\int_{\R}\varphi(x)\left(e^{-ix\xi}-1\right)dx\right|
=\left|\int_{\R}x\varphi(x)\left(\int_0^\xi -ie^{-ixt}\,dt\right)dx\right|
\leq C''|\xi|
\]
for all $\xi\in\R$, where $C''$ is a positive constant. Consequently, $Dm\in L^2(\R)$ because $\psi\in L^2(\R)$. So $M\in L^1(\R)$ since we can write $M$ as the product of two square integrable functions: for all $x\in\R$, we have
\[
M(x)=\frac{1}{\sqrt{1+x^2}}\left(\sqrt{1+x^2}\,M(x)\right),
\]
where the second factor is square integrable, because $m$ and $Dm$ are square integrable on~$\R$. Moreover, by the Dirichlet condition for Fourier inversion theorem (since $m$ and $Dm$ are piecewise continuous), we have
\[
\int_{\R} M(x)\,dx=\hat{M}(0)=m(0^+)+m(0^-)=1
\]
using~\eqref{CondAdm} where $m(0^\pm)=\lim_{\xi\to 0^{\pm}}m(\xi)$.

By definition of $M$ and by Fourier inversion theorem in~\eqref{M}, we have
\[
M_{\varepsilon,r}(t)=\frac{1}{2}\left(\frac{1}{\varepsilon}M\left(\frac{t}{\varepsilon}\right)-\frac{1}{r}M\left(\frac{t}{r}\right)\right)
\]
for all $t\in\R$ and we then obtain
\[
f_{\varepsilon,r}(x)=\frac{1}{2}\left(\int_{\R}\frac{1}{\varepsilon}M\left(\frac{x-t}{\varepsilon}\right)f(t)\,dt-\int_{\R}\frac{1}{r}M\left(\frac{x-t}{r}\right)f(t)\,dt\right).
\]
The first integral converges to $f(x)$ as $\varepsilon$ tends to $0^+$ by Lebesgue theorem. The second integral converges to $0$ as $r$ tends to $+\infty$ thanks to Lemma 6.3.3 in~\cite{Ho}, because $f$ is bounded and weakly oscillating around on the origin, and $M\in L^1(\R)$ is of integral equal to $1$. The conclusion follows.
\end{proof}

\bigskip

\noindent L. Simons\\
Institute of Mathematics\\
University of Liège \\
Grande Traverse 12 (Bât. 37)\\
4000 Liège, Belgium\\
e-mail: \textit{L.Simons@ulg.ac.be}

\end{document}